\documentclass[12pt]{article}
\usepackage{amsmath,amsfonts,amsthm,amssymb}
\usepackage{newpxtext}
\usepackage{xcolor}
\usepackage{enumitem}
\usepackage{graphicx}
\usepackage{authblk}
\usepackage{latexsym}
\usepackage[mathscr]{eucal}

\newtheorem{theorem}{Theorem}[section]
\theoremstyle{plain}
\newtheorem{acknowledgement}{Acknowledgement}
\newtheorem{corollary}[theorem]{Corollary}
\newtheorem{lemma}[theorem]{Lemma}
\newtheorem{proposition}[theorem]{Proposition}
\numberwithin{equation}{section}
\theoremstyle{definition}

\newtheorem{remark}[theorem]{Remark}
\newtheorem{definition}[theorem]{Definition}
\newtheorem{definitions}[theorem]{Definitions}

\newtheorem{example}[theorem]{Example}

\providecommand{\keywords}[1]
{
  \small	
  \textbf{\textit{Keywords---}} #1
}
\providecommand{\subjclass}[1]
{
  \small	
  \textbf{\textit{AMS subject classification ---}} #1
}

\title{On amalgamation of inverse semigroups over ample semigroups}

\author{Nasir Sohail}

\affil{Institute of Mathematics and Statistics, the University of Tartu, Tartu, Estonia}

\begin{document}
\maketitle

\begin{abstract}
A semigroup amalgam $(S; T_{1}, T_{2})$ is known to be non-embeddable if $T_{1}$ and $T_{2}$ are both groups (completely regular semigroups, Clifford semigroups) but $S$ is not such. We prove some non-embeddability conditions for semigroup amalgams $(S; T_{1}, T_{2})$ in which $T_1$ and $T_2$ are inverse semigroups but $S$ is not such. We also introduce some sufficient conditions that make the inverse hulls of the copies of $S$ in $T_1$ and $T_2$ isomorphic; making, in turn, the amalgam $(S; T_{1}, T_{2})$ weakly embeddable. 
\end{abstract}

\keywords{Inverse semigroup, Amalgam, Antiamalgamation pair, Ample, Rich ample,}

\subjclass{20M18, 20M15, 20M10}

\section{Motivation}
The amalgamation problem of semigroups has its origins in the early work of J. M. Howie from the 1960s. (The inpiration came from group amalgams, which were considered earlier by O. Schreier.) The topic was then extensively studied by various mathematicians during the second half of the previous century. References to this work may be found in Howie's celebrated monograph \cite{Howie}, of which the last chapter is also dedicated to semigroup amalgams. The main emphasis, during all these years, had been on determining the embeddability conditions for semigroup amalgams. Non-embeddable amalgams were discovered sporadically, usually as by-products. One of Howie's pioneering articles \cite{Howie 1}, however, provided an important class of non-embeddable amalgams, that may essentially be viewed as groups intersecting in semigroups. Generalizing Howie's result, Rahkema and Sohail \cite{Kristiina and Nasir} came up in 2014 with two more classes of non-embeddable semigroup amalgams. The current article furthers the same line of research of investigating the (non-embeddability of) amalgams that may essentially be viewed as inverse semigroups intersecting in a non-inverse semigroup.\vspace{2pt}

The study of ample semigroups, and their variants, has been an active area of research for many decades, see for instance \cite{Fountain} and its references. As every ample semigroup $S$ gives rise to an amalgam $(S;T_{1},T_{2})$, where $T_{1}$ and $T_{2}$ are inverse semigroups, it was natural for us to consider the amalgams $(S;T_{1},T_{2})$, such that $T_{1}$ and $T_{2}$ are inverse semigroups and $S$ belongs to certain class of ample semigroups. In fact, we introduce, in this connection, the classes of rich and ultra-rich ample semigroups; the intersection of the latter class with the class of inverse semigroups is precisely the class of all groups. 

\section{Introduction and preliminaries}

Given a semigroup $S$, an element $x\in S$ is called \textit{invertible} if there
exists a unique element $x^{-1}\in S$ such that $xx^{-1}x=x$ and $
x^{-1}xx^{-1}=x^{-1}$. We call $S$ an \textit{inverse semigroup} if every $x\in S$ is invertible. \textit{Inverse monoids} are defined similarly. Let $X$ be a non-empty set. Then the set $\mathcal{I}_{X}$ of all partial bijections of $X$ is an inverse semigroup under the usual composition of partial maps. We call $\mathcal{I}_{X}$ the \textit{symmetric inverse semigroup} over $X$. By the Wagner-Preston representation theorem (see, for instance, \cite{Howie} Theorem 5.1.7) any inverse semigroup $S$ can be embedded in the symmetric inverse semigroup $\mathcal{I}_{S}$. If $S$ is a subsemigroup of an inverse semigroup $T$ then the inverse subsemigroup of $T$ generated by $S$ is called the \textit{inverse hull} of $S$ in $T$. \textit{Homomorphisms} of inverse semigroups (monoids) are precisely the semigroup homomorphisms. Throughout this article, the maps will be written to the right of their arguments. Also, we shall omit parentheses around the arguments if there is no risk of confusion. For further details about inverse semigroups, as well as standard definitions in semigroup theory, the reader may refer to the texts \cite{Howie, Lawson}.\vspace{2pt}

A semigroup $S$ is called \textit{right ample} if it can be embedded in an inverse semigroup $T$ (typically, in the symmetric inverse semigroup $\mathcal{I}_{X}$ of a non-empty set $X$) such that the image of $S$ is closed under the unary operation $s \longmapsto s ^{-1}s $, where $S$ is identified with its isomorphic copy in $T$ and $s^{-1}\in T$ denotes the inverse of $s \in S$. We shall call $T$ an inverse semigroup \textit{associated} with $S$. \textit{Left ample} semigroups are defined analogously. We say that $S$ is \textit{ample} if it is both right and left ample.  If $S$ is a subsemigroup of an associated inverse semigroup $T$ then we shall say that $S$ is (\textit{right}, \textit{left}) \textit{ample in} $T$. Every full subsemigroup of an inverse semigroup $S$ is ample in $S$. The converse is not true; for example, $\mathbb{N}$ is ample but not full in the multiplicative monoid $\mathbb{Q}$. It is possible that $S$ is made into a left and a right ample semigroup by different associated inverse semigroups. In such a case, the problem of finding a single (associated) inverse semigroup making $S$ into a left as well as right ample semigroup is, in general, undecidable (\cite{Gould and Kambites}, Theorem 3.4 and Corollary 4.3). If $T_{1}$ and $T_{2}$ are inverse semigroups admitting a homomorphism $\phi:T_{1}\longrightarrow T_{2}$ and $S$ is right (respectively, left) ample in $T_{1}$, then one can easily verify that $S\phi$ is right (respectively, left) ample in $T_{2}$. For further details about ample semigroups, the reader is referred to \cite{Fountain} and the references contained therein.\vspace{2pt}

A semigroup \textit{amalgam} is a $5$-tuple $\mathcal{A\equiv }$ $(S;T_{1},T_{2};\phi _{1},\phi _{2})$ comprising pair-wise disjoint semigroups $S$, $T_{1}$, $T_{2}$ and monomorphisms
\begin{equation*}
\phi _{i}:S\longrightarrow T_{i}\text{, }1 \leq i\leq 2 \text{.}
\end{equation*}
We say that $\mathcal{A}$ is \textit{embeddable} (or \textit{strongly embeddable}, for emphasis) if there exists a semigroup $T$ admitting monomorphisms $\psi _{i}:T_{i}\longrightarrow T$, $1 \leq i\leq 2$, such that 

\begin{enumerate}
\item[$(i)$] $\phi _{1}\psi _{1}=\phi _{2}\psi _{2}$,
\item[$(ii)$] $\forall ${\tiny \ }$t_{1}\in T_{1}$, $\forall ${\tiny \ }$
t_{2}\in T_{2}$, $t_{1}\psi _{1}=t_{2}\psi _{2}$ $\Longrightarrow $ $\exists 
$ $s\in S$ such that $t_{1}=s\phi _{1}$, $t_{2}=s\phi _{2}$.
\end{enumerate}

\noindent If Condition $(ii)$ is not necessarily satisfied, then $\mathcal{A}$ is said to be \textit{weakly embeddable}. We call $(S;T_{1},T_{2};\phi _{1},\phi _{2})$ a \textit{special amalgam} if $T_{1}$ and $T_{2}$ are isomorphic, say, via $\nu :T_{1}\longrightarrow T_{2}$, such that $ s \phi _{1}\nu
=s \phi _{2}$ for all $s\in S$. Any special amalgam is weakly embeddable; for instance, in $T_{1}$. It is customary to denote a semigroup amalgam by $(S;T_{1},T_{2})$ if no explicit mention of $\phi_{1}$ and $\phi _{2}$ is needed. We shall also call $(S;T_{1},T_{2})$ an \textit{amalgam over} $S$. Every ample semigroup $S$ gives rise to an amalgam $(S;T_{1},T_{2})$ in which $S$ is right (respectively, left) ample in the inverse semigroup $T_{i}$ (respectively, $T_{j}$), where $\{i,j\}=\{1,2\}$. We shall consider these amalgams in Theorem \ref{Non-embeddable2}. \vspace{2pt}

Let $T_{1}\ast T_{2}$ denote the free product of semigroups $T_{1}$ and $T_{2}$. Then by the \textit{amalgamated coproduct} of $(S;T_{1},T_{2};\phi_{1},\phi _{2})$ we mean the quotient semigroup $\left( T_{1}\ast T_{2}\right) /\theta _{R}$, where $\theta _{R}$ denotes the congruence on $T_{1}\ast T_{2}$ generated by the relation
\begin{equation*}
   R =\{\left(s\phi _{1},s\phi _{2}\right) :s\in S\}.  
\end{equation*}
We denote $\left( T_{1}\ast
T_{2}\right) /\theta _{R}$ by $T_{1}\ast _{S}T_{2}$. In fact, the following diagram is a pushout in the category of all semigroups, where the homomorphisms
\begin{equation*}
    \eta _{i}: T_{i}\longrightarrow T_{1}\ast _{S}T_{2}, \,1\leq i \leq 2,
\end{equation*}
 send $x\in T_{i}$ to the congruence class $(x)_{\theta_R}\in T_{1}\ast _{S}T_{2}$.
 
\begin{figure}[htbp]
\centering
\includegraphics [width=50mm]{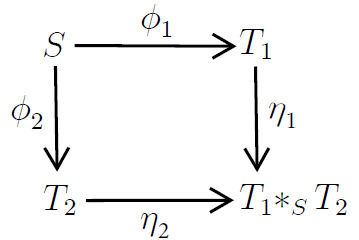}
\caption{Amalgamated coproduct}\label{D1}
\end{figure}

 \begin{theorem}[\cite{Howie}, Theorem 8.2.4]
\label{product}A semigroup amalgam $(S;T_{1},T_{2})$ is (weakly) embeddable if and only if it is (weakly) embedded in $T_{1}\ast _{S}T_{2}$ via the homomorphisms $\eta_{i}:T_{i}\longrightarrow T_{1}\ast _{S}T_{2}$, $i\in \{1,2\}$, defined above.
\end{theorem}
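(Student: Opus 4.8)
The plan is to reduce everything to the universal property of the pushout $T_{1}\ast _{S}T_{2}$, which the excerpt has already established (the displayed square is a pushout in the category of all semigroups). One direction is immediate: if the amalgam is (weakly) embedded in $T_{1}\ast _{S}T_{2}$ via the $\eta _{i}$, then by definition the semigroup $T=T_{1}\ast _{S}T_{2}$ together with $\psi _{i}=\eta _{i}$ witnesses (weak) embeddability, so there is nothing to prove.

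For the converse I would start from an arbitrary witnessing (weak) embedding: a semigroup $T$ and monomorphisms $\psi _{i}:T_{i}\longrightarrow T$ with $\phi _{1}\psi _{1}=\phi _{2}\psi _{2}$. Since these two maps agree on $S$, the universal property of the pushout supplies a unique homomorphism $\Psi :T_{1}\ast _{S}T_{2}\longrightarrow T$ with $\eta _{i}\Psi =\psi _{i}$ for $i\in\{1,2\}$. The commutativity of the pushout square already yields Condition $(i)$ for the $\eta _{i}$, namely $\phi _{1}\eta _{1}=\phi _{2}\eta _{2}$: indeed $(s\phi _{1},s\phi _{2})\in R$ for every $s\in S$ forces $s\phi _{1}$ and $s\phi _{2}$ into the same $\theta _{R}$-class.

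The crux is to upgrade the $\eta _{i}$ from mere homomorphisms to monomorphisms. Here I would use that $\psi _{i}=\eta _{i}\Psi$ is injective by hypothesis; since (with maps acting on the right) a composite can be injective only if its first factor is, each $\eta _{i}$ is forced to be injective, hence a monomorphism. This establishes weak embeddability in $T_{1}\ast _{S}T_{2}$.

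Finally, for the strong case I would verify Condition $(ii)$. Assuming the witnessing embedding satisfies $(ii)$, suppose $t_{1}\eta _{1}=t_{2}\eta _{2}$ in $T_{1}\ast _{S}T_{2}$; applying $\Psi$ gives $t_{1}\psi _{1}=t_{2}\psi _{2}$, whence $(ii)$ for the $\psi _{i}$ produces $s\in S$ with $t_{1}=s\phi _{1}$ and $t_{2}=s\phi _{2}$, which is exactly $(ii)$ for the $\eta _{i}$. The only genuine obstacle is bookkeeping: keeping the right-handed composition order straight and confirming the injectivity transfer $\psi _{i}=\eta _{i}\Psi$ injective $\Rightarrow$ $\eta _{i}$ injective; once the pushout property is invoked, the rest is formal.
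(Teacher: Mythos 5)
Your proof is correct and takes essentially the same route as the paper, which simply states that the result ``follows immediately from the properties of a pushout''; you have spelled out exactly that argument, including the key injectivity transfer $\psi_{i}=\eta_{i}\Psi$ injective $\Rightarrow$ $\eta_{i}$ injective and the verification of Condition $(ii)$ via $\Psi$.
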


\begin{proof}
    Follows immediately from the properties of a pushout.
\end{proof}

A semigroup $S$ is called an \textit{amalgamation base} for a class (equivalently, category) $\mathcal{C}$ of semigroups if every amalgam $(S;T_{1},T_{2})$, with $T_{1},T_{2}\in \mathcal{C}$, is embeddable in some $T\in \mathcal{C}$. Given a semigroup $T_{1} \in \mathcal{C}$ containing an isomorphic copy of a semigroup $S$, we say that $(S;T_{1})$ is an \textit{
amalgamation pair} for $\mathcal{C}$ if for all $T_{2}\in \mathcal{C}$ the amalgam $(S;T_{1},T_{2})$ is embeddable in some $T\in \mathcal{C}$. \textit{Weak amalgamation bases} (\textit{pairs}) are defined similarly.

\begin{theorem}[\protect\cite{Howie}, Theorems 8.6.1 and 8.6.4]
\label{Inv1}Inverse semigroups are amalgamation bases for the classes of all semigroups and inverse semigroups.
\end{theorem}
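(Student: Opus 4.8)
The plan is to reduce the problem, via Theorem \ref{product}, to a statement about the amalgamated coproduct and then to exploit the special structure of inverse semigroups. By Theorem \ref{product} it suffices, for each of the two classes $\mathcal{C}$ under consideration, to show that whenever $S$ is inverse and $T_{1},T_{2}\in\mathcal{C}$ contain copies of $S$, the canonical homomorphisms $\eta_{i}\colon T_{i}\longrightarrow T_{1}\ast_{S}T_{2}$ are injective and jointly satisfy the separation condition $(ii)$, and, in the inverse-semigroup case, that the amalgam can moreover be realised inside an inverse semigroup. For the class of all semigroups I would route the argument through the representation extension property (REP): a right $S$-system is a set $M$ carrying a right action of $S$, and for $S\le S'$ one forms the tensor product $M\otimes_{S}S'$; here $S$ is said to have the REP in $S'$ if the natural map $M\to M\otimes_{S}S'$ is injective for every $M$. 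A standard sufficient condition, going back to Hall, for $S$ to be an amalgamation base in the class of all semigroups is that $S$ possess the REP, together with its left-handed dual, in every semigroup containing it. Thus the problem reduces to establishing the REP for an arbitrary inverse $S$.

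The core step is therefore to prove that an inverse semigroup $S$ has the REP in any containing semigroup $S'$. Here I would use the defining features of inverse semigroups: the idempotents commute and form a semilattice, and each $s\in S$ has a unique inverse $s^{-1}$ with $ss^{-1}$ and $s^{-1}s$ idempotent. Suppose $m,n\in M$ have the same image in $M\otimes_{S}S'$; this identification is witnessed by a finite sequence of elementary transitions of the form $(xu,s')\sim(x,us')$ with $u\in S$, $x\in M$, $s'\in S'$. The idea is to run this zig-zag and use the commuting idempotents to absorb the $S'$-factors back into $S$, collapsing the chain to an equality $m=n$ already holding in $M$. The semilattice of idempotents is exactly what makes this absorption possible, since the products of the relevant idempotents can be reordered and amalgamated without leaving $S$.

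For the class of inverse semigroups an extra ingredient is needed, because $T_{1}\ast_{S}T_{2}$ formed in the category of all semigroups need not be inverse. Here I would instead construct the embedding explicitly inside a symmetric inverse semigroup. Using the Wagner-Preston representation theorem I would represent $T_{1}$ and $T_{2}$ faithfully by partial bijections of sets $X_{1}$ and $X_{2}$ in such a way that the two representations agree on the common inverse subsemigroup $S$; I would then glue $X_{1}$ and $X_{2}$ along the common $S$-action to obtain a single set $X$, and verify that the partial bijections coming from $T_{1}$ and $T_{2}$ extend compatibly to partial bijections of $X$. This yields a homomorphism of the amalgam into $\mathcal{I}_{X}$, and one checks that it is injective on each $T_{i}$ and separates $T_{1}$ from $T_{2}$ off $S$, giving the required embedding into an inverse semigroup.

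I expect the main obstacle to be the core step: controlling the elementary transitions that witness the identification of $m$ and $n$ in $M\otimes_{S}S'$ and showing that the inverse structure forces $m=n$ already in $M$ is the delicate, combinatorial part, and the analogous gluing-and-separation verification in $\mathcal{I}_{X}$ for the inverse-semigroup class carries the same difficulty. Everything else---the reduction furnished by Theorem \ref{product} and the appeals to Wagner-Preston---is either formal or standard.
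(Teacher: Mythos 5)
This theorem carries no proof in the paper at all: it is imported verbatim from Howie (Theorems 8.6.1 and 8.6.4, results due to T.~E.~Hall), so your attempt is really being measured against the classical proofs in the cited source. Your roadmap does track those proofs---the representation extension property for the class of all semigroups, and glued Wagner--Preston representations for the class of inverse semigroups---but as written it has genuine gaps at exactly the two load-bearing points. First, the sufficient condition you invoke is too weak as stated: having the representation extension property (together with its left dual) in every oversemigroup yields, in general, only \emph{weak} embeddability, whereas the paper's definition of amalgamation base demands strong embeddability, i.e.\ condition $(ii)$. What Hall's argument actually requires is the \emph{strong} representation extension property, and verifying that inverse semigroups possess it is the substance of Howie's Theorem 8.6.1. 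Your heuristic for the core step---``run the zig-zag and use commuting idempotents to absorb the $S'$-factors back into $S$''---is not an argument: elementary tensor transitions $(xu,s')\sim(x,us')$ cannot be collapsed by reordering idempotents alone (if they could, the same reasoning would give the representation extension property for far broader classes than inverse semigroups, which is false). The actual proof does not analyze zigzags at all; it explicitly \emph{constructs} an extension of the given $S$-act to an $S'$-act, using the inverse structure to define the extended action, and injectivity of $M\to M\otimes_{S}S'$ falls out of the existence of that extension.

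Second, for the inverse-semigroup class, the phrase ``glue $X_{1}$ and $X_{2}$ along the common $S$-action and verify that the partial bijections extend compatibly'' is precisely the entire difficulty of Hall's 1975 theorem, not a routine verification: a partial bijection coming from $T_{1}$ has no canonical compatible extension to the amalgamated set, and faithfulness together with the separation condition $(ii)$ is obtained by an iterated (direct-limit) construction of representations, not by a one-step gluing into $\mathcal{I}_{X}$. You candidly flag both points as ``the delicate part,'' which is accurate---but those delicate parts \emph{are} the theorem. So the proposal is a correct plan that reproduces the architecture of the cited proofs while assuming, rather than proving, their two essential lemmas; and in the first case the stated reduction (plain representation extension property suffices) would not even deliver the strong embeddability that the paper's notion of amalgamation base requires.
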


\noindent Let $S$ be an inverse and $T$ be an arbitrary semigroup. Then, by the above theorem, $(S;T)$ is an amalgamation pair for
the class of all semigroups. If $S$ and $T$ are both inverse then $(S;T)$ is also an amalgamation pair for the class of inverse semigroups. All of the assertions made in this section about semigroups are also true for monoids.

\section{Amalgamation over ample semigroups}

It was shown by Howie \cite{Howie 1} that a semigroup amalgam $(S;T_{1},T_{2})$ does not embed if $T_{1}$ and $T_{2}$ are both groups but $S$ is not such. Generalizing this result, Rahkema and Sohail \cite{Kristiina
and Nasir} showed that $(S;T_{1},T_{2})$ is non-embeddable if $T_{1}$ and $T_{2}$ are both completely regular (respectively, Clifford) semigroups but $S$ is not completely regular (respectively, Clifford). In this section we shall consider the amalgams 
$(S;T_{1},T_{2})$ in which $T_{1}$ and $T_{2}$ are both inverse semigroups but $S$
is not such---the non-embeddability of such amalgams was left as an open problem in \cite{Kristiina
and Nasir}.

\begin{definition}
Let $\mathcal{C}$ be a class of semigroups. Suppose that $T_{1}\in \mathcal{C}$ contains an isomorphic copy of a semigroup $S$ via $\phi_{1}:S \longrightarrow T_{1}$. Then the pair $(S;T_{1})$ will be called an \textit{antiamalgamation pair} for $\mathcal{C}$ if for every $T_{2}\in \mathcal{C}$ and every monomorphism $\phi_{2}:S\longrightarrow T_{2}$ the amalgam $(S;T_{1},T_{2};\phi_{1}, \phi_{2})$ is non-embeddable (in any semigroup).
\end{definition}

\begin{theorem}
\label{Non-embeddable1}Let $T_{1}$ be an inverse semigroup and $\phi_{1}:S \longrightarrow T_{1}$ be a monomorphism such that $S\phi_{1}$ is right as well as left ample in $T_{1}$. If $S$ is non-inverse then $(S; T_{1})$ is an antiamalgamation pair for the class of inverse semigroups.
\end{theorem}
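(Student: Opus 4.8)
The plan is to argue by contradiction: I assume that for some inverse semigroup $T_{2}$ and some monomorphism $\phi_{2}\colon S\longrightarrow T_{2}$ the amalgam $(S;T_{1},T_{2};\phi_{1},\phi_{2})$ is embeddable in a semigroup $T$, and I derive that $S$ must after all be inverse. Throughout I identify $S$ with the common image determined by $\phi_{1}\psi_{1}=\phi_{2}\psi_{2}$ in $T$, so that $S\subseteq T_{1}\subseteq T$ and $S\subseteq T_{2}\subseteq T$; Condition $(ii)$ of embeddability gives $T_{1}\cap T_{2}=S$. (Equivalently, by Theorem \ref{product}, one may take $T=T_{1}\ast_{S}T_{2}$.) First I would locate a convenient witness to $S$ being non-inverse. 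Since $S\subseteq T_{1}$ and $T_{1}$ is inverse, the idempotents of $S$ lie among those of $T_{1}$ and therefore commute; hence $S$ fails to be inverse precisely by failing to be regular. I would then pick $x\in S$ having no inverse in $S$ and observe that its unique inverse $x^{-1}$ in $T_{1}$ satisfies $x^{-1}\notin S$: were $x^{-1}\in S$, the equation $xx^{-1}x=x$ would exhibit $x$ as regular in $S$.

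Next I record the idempotents produced by ampleness. Put $e=x^{-1}x$ and $f=xx^{-1}$; by right and left ampleness of $S$ in $T_{1}$ we have $e,f\in S$. Let $x^{\ast}$ denote the inverse of $x$ in the inverse semigroup $T_{2}$, and set $e_{2}=x^{\ast}x$ and $f_{2}=xx^{\ast}$, which are idempotents of $T_{2}$. The heart of the argument is the claim that $e=e_{2}$ and $f=f_{2}$ inside $T$. To obtain $e=e_{2}$, I would use that $e=x^{-1}x$ lies in $S\subseteq T_{2}$, so $e$ is an idempotent of the inverse semigroup $T_{2}$ and therefore commutes with $e_{2}$. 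On the other hand, from $xe_{2}=x$ and $xe=x$, left-multiplying by $x^{-1}$ and by $x^{\ast}$ respectively yields $ee_{2}=x^{-1}xe_{2}=e$ and $e_{2}e=x^{\ast}xe=e_{2}$; commutativity then forces $e=e_{2}$. The symmetric computation — now using $f\in S\subseteq T_{2}$ together with $fx=x$ and $f_{2}x=x$, right-multiplied by $x^{\ast}$ and $x^{-1}$ respectively — gives $ff_{2}=f_{2}$ and $f_{2}f=f$, whence $f=f_{2}$.

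Finally I would conclude by a uniqueness-of-inverse computation. Now $x^{-1}$ and $x^{\ast}$ are both inverses of $x$ in $T$ with $x^{-1}x=e=e_{2}=x^{\ast}x$ and $xx^{-1}=f=f_{2}=xx^{\ast}$, so
\[
x^{-1}=x^{-1}xx^{-1}=(x^{\ast}x)x^{-1}=x^{\ast}(xx^{-1})=x^{\ast}(xx^{\ast})=x^{\ast}.
\]
Thus $x^{-1}=x^{\ast}\in T_{2}$, and since also $x^{-1}\in T_{1}$ we obtain $x^{-1}\in T_{1}\cap T_{2}=S$, contradicting the choice of $x$. As $T_{2}$ and $\phi_{2}$ were arbitrary, $(S;T_{1})$ is an antiamalgamation pair for the class of inverse semigroups.

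I expect the collapse $e=e_{2}$ (equivalently $f=f_{2}$) to be the only genuine obstacle: it is exactly the point at which two-sided ampleness — which guarantees $e,f\in S$ and thereby places them inside $T_{2}$ — meets the commutativity of idempotents in the inverse semigroup $T_{2}$. Everything before it (extracting the non-regular witness $x$ with $x^{-1}\notin S$) and after it (the uniqueness computation forcing $x^{-1}=x^{\ast}$) is routine.
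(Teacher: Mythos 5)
Your proof is correct and takes essentially the same route as the paper's: your equalities $e=e_2$ and $f=f_2$, obtained by combining two-sided ampleness (which places $x^{-1}x$ and $xx^{-1}$ in $S$, hence in $T_2$) with the commutativity of idempotents in $T_2$, are exactly the paper's equations \eqref{IdmptntsinPO}--\eqref{IdmptntsinPO1}, and your final uniqueness computation forcing $x^{-1}=x^{\ast}$ matches \eqref{IdmptntsinPO2}. The only cosmetic difference is that you argue by contradiction inside an arbitrary containing semigroup $T$, while the paper computes directly in $T_1\ast_S T_2$ and appeals to Theorem \ref{product}, which makes the two formulations equivalent.
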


\begin{proof}
Let $S$, $T_{1}$ and $\phi_{1}$ be as described in the statement of the theorem. Let $T_{2}$ be an inverse semigroup admitting a monomorphism $\phi _{2}:S \longrightarrow T_{2}$. Given $s \in S$, let us denote $s\phi_1$ and $s \phi_2$ by $s_1$ and $s_2$, respectively. Identifying $S$ with its isomorphic copies $S \phi_1$ and $S\phi_2$, and using the properties of inverses, we may calculate in $T_1 *_S T_2$:
\begin{equation}\label{IdmptntsinPO}
\begin{split}
ss_1^{-1}ss_2^{-1}&=ss_2^{-1}, \hspace{10pt} ss_2^{-1}ss_1^{-1}=ss_1^{-1},\\
s_2^{-1}ss_1^{-1}s&=s_2^{-1}s, \hspace{10pt} s_1^{-1}ss_2^{-1}s=s_1^{-1}s.
\end{split}
\end{equation}
Since $S\phi_1 $ is right and left ample in $T_1$, the identification of $S$ with $S\phi_1$ also gives,

\begin{equation*}
    ss_1^{-1},\,s_1^{-1}s \in S.
\end{equation*}
By the commutativity of idempotents in $T_2$, we may write from (\ref{IdmptntsinPO}):

\begin{equation}\label{IdmptntsinPO1}
ss_1^{-1}=ss_2^{-1}, \hspace{10pt} s_1^{-1}s=s_2^{-1}s.
\end{equation}
Now, using (\ref{IdmptntsinPO1}), we calculate in $T_1*_ST_2$:
\begin{equation}\label{IdmptntsinPO2}
s_1^{-1}=s_1^{-1}(ss_1^{-1})=s_1^{-1}(ss_2^{-1})= (s_1^{-1}s)s_2^{-1}=(s_2^{-1}s)s_2^{-1} = s_2^{-1}.
\end{equation}
Because $S\phi_1$ and $S\phi_2$ are non-inverse, there exists $s \in S$ such that $s_i^{-1} \notin T_i,\, 1\leq i \leq 2$. The amalgam $(S;T_1,T_2)$, therefore, fails to embed by (\ref{IdmptntsinPO2}).
\end{proof}

\begin{example}\label{Example3.3}
\upshape Let $\mathbb{L}$ denote the lattice of ample submonoids of the symmetric inverse semigroup $\mathcal{I}_{n}$ over a finite chain $C_{n}: 1<2<\cdots <n$, given in \cite{NSA}, Figure \ref{D1}. The chain $\mathbb{L}_{\text{INV}}:\{\iota\} \subseteq \mathcal{OI}_{n}\subset \mathcal{RI}_{n}\subset \mathcal{I}_{n}^{\prime }\subset \mathcal{I}_{n}$ constitutes the sublattice of $\mathbb{L}$ comprising the inverse submonoids of $\mathcal{I}_{n}$. This gives a (finite) set
\begin{equation*}
\{(S,T):S\in \mathbb{L\smallsetminus L}_{\text{INV}}~,T\in \mathbb{L}_{\text{INV}} \text{ with } S \subseteq T \mathcal{\}}
\end{equation*}
of antiamalgamation pairs for the class of inverse semigroups.
\end{example}

\begin{theorem}
\label{Non-embeddable2}Let a non-inverse semigroup $S$ be made into a right (respectively, left) ample semigroup by an associated inverse semigroup $T_{1}$ (respectively, $T_{2}$). Then the amalgam $(S;T_{1},T_{2})$ is not embeddable (in any semigroup).
\end{theorem}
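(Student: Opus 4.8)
The plan is to work inside the amalgamated coproduct $T_1 \ast_S T_2$ and invoke Theorem \ref{product}, showing that condition $(ii)$ of embeddability fails. As in the proof of Theorem \ref{Non-embeddable1}, I would identify $S$ with its copies $S\phi_1, S\phi_2$ and write $s_i = s\phi_i$, so that the image of $s$ is a single element of $T_1 \ast_S T_2$ while $s_1^{-1}$ and $s_2^{-1}$ (the inverses taken in $T_1$ and $T_2$) a priori give distinct elements. The four identities of (\ref{IdmptntsinPO}) remain valid here, since they follow purely from $s_1 s_1^{-1} s_1 = s_1$ in $T_1$ and $s_2 s_2^{-1} s_2 = s_2$ in $T_2$; the goal is again to reach $s_1^{-1} = s_2^{-1}$ and then exhibit a non-invertible $s \in S$ to contradict $(ii)$.

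The one genuinely new point is how to obtain the two equalities of (\ref{IdmptntsinPO1}), namely $ss_1^{-1} = ss_2^{-1}$ and $s_1^{-1}s = s_2^{-1}s$, because now the two ample conditions sit on opposite factors. For $s_1^{-1}s = s_2^{-1}s$ I would argue as in Theorem \ref{Non-embeddable1}: right-ampleness of $S$ in $T_1$ places $a := s_1^{-1}s$ in $S$, and since $a\phi_2$ and $s_2^{-1}s$ are commuting idempotents of $T_2$, the relations $s_1^{-1}ss_2^{-1}s = s_1^{-1}s$ and $s_2^{-1}ss_1^{-1}s = s_2^{-1}s$ force $s_1^{-1}s = s_2^{-1}s$. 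For the companion equality $ss_1^{-1} = ss_2^{-1}$ I must instead use the left-ampleness of $S$ in $T_2$: this places $b := ss_2^{-1}$ in $S$, and now I would commute the idempotents $ss_1^{-1}$ and $b\phi_1$ inside $T_1$ (rather than $T_2$), so that $ss_1^{-1}ss_2^{-1} = ss_2^{-1}$ and $ss_2^{-1}ss_1^{-1} = ss_1^{-1}$ yield $ss_1^{-1} = ss_2^{-1}$.

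Once both equalities of (\ref{IdmptntsinPO1}) are in hand, the computation (\ref{IdmptntsinPO2}) goes through verbatim and gives $s_1^{-1} = s_2^{-1}$ in $T_1 \ast_S T_2$. To finish, since $S$ is non-inverse I would choose $s \in S$ with no inverse in $S$; were the amalgam embeddable, condition $(ii)$ applied to $s_1^{-1}\eta_1 = s_2^{-1}\eta_2$ would produce $s' \in S$ with $s'\phi_1 = s_1^{-1}$, whence $s'$ would be an inverse of $s$ inside $S$, a contradiction.

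The part I expect to require the most care is the bookkeeping in the middle step: I must check that $a = s_1^{-1}s$ and $b = ss_2^{-1}$, which land in $S$ by the respective ample conditions, are genuinely idempotent there (so that $a\phi_2$ and $b\phi_1$ are idempotents of $T_2$ and $T_1$), and that the commutativity of idempotents is applied in the correct factor for each equality --- in $T_2$ for $s_1^{-1}s = s_2^{-1}s$, but in $T_1$ for $ss_1^{-1} = ss_2^{-1}$. This asymmetry, absent from Theorem \ref{Non-embeddable1} where both ample conditions lived in $T_1$, is the crux of the argument.
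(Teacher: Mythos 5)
Your proposal is correct and follows essentially the same route as the paper's own proof: the paper likewise derives the equations (\ref{IdmptntsinPO}) in $T_{1}\ast_{S}T_{2}$, notes that $s_{1}^{-1}s,\; ss_{2}^{-1}\in S$ by the respective ample conditions, and then commutes the idempotents $ss_{1}^{-1}, ss_{2}^{-1}$ in $T_{1}$ and $s_{1}^{-1}s, s_{2}^{-1}s$ in $T_{2}$ --- exactly the asymmetric bookkeeping you identify as the crux --- before concluding $s_{1}^{-1}=s_{2}^{-1}$ via (\ref{IdmptntsinPO2}) and contradicting condition $(ii)$. Your write-up is in fact slightly more explicit than the paper's (which compresses this into one sentence and appeals to the proof of Theorem \ref{Non-embeddable1}), but there is no mathematical difference.
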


\begin{proof}
Let $S$, $T_1$ and $T_2$ be as given in the statement of the theorem. Then, as before, the identification of $S$ with its isomorphic copies in $T_1$ and $T_2$ gives equations (\ref{IdmptntsinPO}). Since $S$ is right ample in $T_1$ and left ample in $T_2$, we have $s_1^{-1}s,\, ss_2^{-1} \in S$. Subsequently, $ss_1^{-1}$, $ss_2^{-1}$ commute in $T_1$ and $s_1^{-1}s$, $s_2^{-1}s$ commute in $T_2$. Using the argument from the proof of Theorem \ref{Non-embeddable1}, we can once more deduce equations  (\ref{IdmptntsinPO1}) from equations  (\ref{IdmptntsinPO}). However, equations (\ref{IdmptntsinPO1}) give $s_1^{-1}=s_2^{-1}$ implying (as in the said proof) that $(S;T_1,T_2)$ is non-embeddable.
\end{proof}

\section{Weak amalgamation}

Given a subsemigroup $S$ of an inverse semigroup $T$, we define its \textit{dual} to be the subsemigroup
\begin{equation*}
S^{\prime }=\{s^{-1}\in T:s\in S\}. 
\end{equation*}
Defining $\alpha :S \longrightarrow S'$ by $ s\mapsto s^{-1}$, we have

\begin{equation*}
(xy)\alpha=\left( xy\right) ^{-1}=y^{-1}x^{-1}= (y)\alpha
(x)\alpha,\forall\, x,y\in S,
\end{equation*}

\noindent implying that $S$ and $S'$ are \textit{anti-isomorphic}. Clearly, if non-empty, $S\cap S^{\prime }$ is an inverse subsemigroup of $S$ and $S^{\prime
}$ with $E(S)=E(S')\subseteq S\cap S^{\prime }$, where $E(U)$ denotes the set of idempotents of any semigroup $U$. Also, if $S$ is right (respectively, left) ample in $T$ then $S'$ is a left (respectively, right) ample subsemigroup of $T$.

\begin{lemma} \label{Rmrk3.1}
    Let $T_{1}$ and $T_{2}$ be inverse semigroups containing isomorphic copies, say $S_{1}$ and $S_{2}$, of a semigroup $S$. Then, there exists a bijection between the sets $S_{1} \cup S'_{1}$ and $S_{2} \cup S'_{2}$. Moreover, for any $x\in S_{1} \cup S'_1$ one has:
    \begin{equation*}
        (x^{-1})\psi=(x\psi)^{-1}.
    \end{equation*}
\end{lemma}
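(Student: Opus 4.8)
The plan is to manufacture the bijection directly from the isomorphism between the two copies. Writing $\phi_i : S \longrightarrow T_i$ for the given monomorphisms, so that $S_i = S\phi_i$, the composite $\theta := \phi_1^{-1}\phi_2 : S_1 \longrightarrow S_2$ is an isomorphism. I would then define $\psi$ piecewise: put $x\psi = x\theta$ for $x \in S_1$, and $(s^{-1})\psi = (s\theta)^{-1}$ for $s^{-1} \in S'_1$ (with $s \in S_1$). The image plainly lands in $S_2 \cup S'_2$. Once $\psi$ is shown to be well defined, the asserted identity $(x^{-1})\psi = (x\psi)^{-1}$ drops out by a short case check: first note that $S_1 \cup S'_1$ is closed under taking inverses in $T_1$, so the left-hand side makes sense; then for $x \in S_1$ both sides equal $(x\theta)^{-1}$, while for $x = s^{-1} \in S'_1$ both sides equal $s\theta$.

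The main obstacle is well-definedness, since the union $S_1 \cup S'_1$ need not be disjoint: the overlap $S_1 \cap S'_1 = \{x \in S_1 : x^{-1} \in S_1\}$ is exactly the inverse subsemigroup remarked on above, and on it the two prescriptions for $\psi$ must agree. Concretely I need $x\theta = (x^{-1}\theta)^{-1}$, equivalently $(x^{-1})\theta = (x\theta)^{-1}$, for every $x \in S_1 \cap S'_1$. Here is the crux: $S_1$ and $S_2$ are merely isomorphic copies of the (possibly non-inverse) semigroup $S$, so I cannot simply appeal to ``an isomorphism preserves inverses.'' Instead I would observe that for such an $x$ both $x$ and $x^{-1}$ lie in $S_1$, so the relations $xx^{-1}x = x$ and $x^{-1}xx^{-1} = x^{-1}$ are equations inside $S_1$; applying the semigroup isomorphism $\theta$ transports them verbatim into $S_2 \subseteq T_2$, which shows that $x^{-1}\theta$ satisfies the defining equations of an inverse of $x\theta$ in $T_2$. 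Since $T_2$ is an inverse semigroup, $x\theta$ has a \emph{unique} inverse, and hence $x^{-1}\theta = (x\theta)^{-1}$, exactly the compatibility required. (This also shows $x\theta \in S_2 \cap S'_2$, so $\psi$ respects the overlaps.)

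To finish, I would obtain bijectivity by constructing the symmetric map $\chi : S_2 \cup S'_2 \longrightarrow S_1 \cup S'_1$ from $\theta^{-1}$ in the identical fashion; the same uniqueness-of-inverses argument makes $\chi$ well defined, and evaluating $\psi\chi$ and $\chi\psi$ separately on the $S_i$-part and the $S'_i$-part shows both composites to be the identity. Thus $\psi$ is the required bijection, and the displayed identity has already been verified in the opening paragraph.
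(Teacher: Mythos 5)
Your proposal is correct and takes essentially the same route as the paper: your piecewise map (with $x\psi = x\theta$ on $S_1$ and $(s^{-1})\psi = (s\theta)^{-1}$ on $S'_1$) is exactly the paper's $\psi = \phi \cup \phi'$, where $\phi' = \alpha_1^{-1}\circ\phi\circ\alpha_2$ for the anti-isomorphisms $\alpha_i : S_i \longrightarrow S'_i$, and your well-definedness argument on the overlap $S_1 \cap S'_1$ --- transporting the relations $xx^{-1}x = x$ and $x^{-1}xx^{-1} = x^{-1}$ through the isomorphism and invoking uniqueness of inverses in the inverse semigroup $T_2$ --- is precisely the paper's central computation. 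Your construction of the symmetric map from $\theta^{-1}$ to establish bijectivity, and the two-case verification of $(x^{-1})\psi = (x\psi)^{-1}$, likewise mirror the paper's proof.
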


\begin{proof}
    Let $\phi$ be the isomorphism from $S_{1}$ to $S_{2}$. Then $\phi'= \alpha_{1}^{-1} \circ \phi \circ \alpha_{2}$ is
    an isomorphism from $S'_{1}$ to $S'_{2}$, where $\alpha_{i}:S_{i} \longrightarrow S'_{i}$, $i=1,2$, are the anti-isomorphisms defined by $s_{i} \mapsto s_{i}^{-1},\, s_{i} \in S_{i}$. Let $x \in S_{1} \cap S'_{1}$. Then $x^{-1}\in S_{1} \cap S'_{1}$, and, in particular, $x,x^{-1} \in S_{1}$. Using the assumption that $\phi$ is an isomorphism, we have:
    \begin{equation*}
    \begin{split}
x\phi&=(xx^{-1}x)\phi=(x)\phi(x^{-1})\phi(x)\phi, \vspace{6pt}\\
(x^{-1})\phi&=(x^{-1}xx^{-1})\phi =(x^{-1})\phi(x)\phi(x^{-1})\phi.
    \end{split}
\end{equation*}
    
\noindent Hence, we have $(x^{-1})\phi=((x)\phi)^{-1}$ whenever $x \in S_{1} \cap S'_{1}$. We can now calculate:
    
    \begin{equation*}    x\phi=((x^{-1})^{-1})\phi= ((x^{-1})\phi)^{-1}= x \phi'.
    \end{equation*}
    This implies that $\phi$ and $\phi'$ agree on $S_{1} \cap S'_{1}$. Consequently, the map
    \begin{equation*}
        \psi=\phi \cup \phi':S_{1} \cup S'_{1} \longrightarrow S_{2} \cup S'_{2}
    \end{equation*}
    is well-defined. Using a similar argument one may also construct
    \[\psi^{-1}:S_2 \cup S'_2 \longrightarrow S_2 \cup S'_2,
    \]
    such that $\psi \circ \psi'^{-1}$ and $\psi'^{-1}\circ \psi$ are both identity functions. This implies that $\psi$ is a bijection, as required.\vspace{2pt}

    Concerning the second part of the lemma, we have seen above that $(x^{-1})\phi=((x)\phi)^{-1}$ for all $x \in S_1$. Similarly, it can be shown that $((x')^{-1})\phi'=((x)\phi')^{-1}$ for all $x' \in S'_1$. This gives $(x^{-1})\psi = (x\psi)^{-1}$, and the proof is complete.
\end{proof}

\begin{proposition}\label{VG}
    Let $S$ be any semigroup and $T_{i}$, $1 \leq i \leq 2$, be inverse semigroups admitting monomorphisms 
    $\phi_{i}:S \longrightarrow     T_{i}$. 
    Then the amalgam $(S, T_{1}, T_{2}; \phi_{1}, \phi_{2})$ is weakly embeddable in an inverse semigroup if and only if $(S;V_{1},V_{2})$ constitutes a special amalgam, where $V_{i}$ is the inverse hull of  $S\phi_{i}$ in $T_{i}$.
\end{proposition}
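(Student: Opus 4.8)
The plan is to prove the two implications separately, exploiting the special features of inverse semigroups. Throughout I write $S_i=S\phi_i$, let $j_i\colon V_i\hookrightarrow T_i$ be the inclusions, and use repeatedly that any homomorphism between inverse semigroups automatically preserves the unary inverse operation, hence carries inverse subsemigroups to inverse subsemigroups. The implication from special amalgam to weak embeddability will be driven by the amalgamation-base property of inverse semigroups (Theorem \ref{Inv1}), while the reverse implication rests on the observation that an injective homomorphism of inverse semigroups sends an inverse hull onto an inverse hull.

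For the implication that a special amalgam yields weak embeddability, suppose $\nu\colon V_1\to V_2$ is an isomorphism with $s\phi_1\nu=s\phi_2$ for all $s\in S$. Since $V_1$ is an inverse semigroup and $T_1,T_2$ are inverse, I would apply Theorem \ref{Inv1} to the inverse-semigroup amalgam $(V_1;T_1,T_2;\,j_1,\,\nu j_2)$ (after the usual disjointness renaming), obtaining an inverse semigroup $T$ with monomorphisms $\psi_i\colon T_i\to T$ such that $j_1\psi_1=\nu j_2\psi_2$, i.e.\ $v\psi_1=(v\nu)\psi_2$ for every $v\in V_1$. It then remains to verify the weak-embeddability condition $\phi_1\psi_1=\phi_2\psi_2$ for $S$: since $s\phi_1\in S_1\subseteq V_1$, one has $s\phi_1\psi_1=(s\phi_1\nu)\psi_2=s\phi_2\psi_2$, using the matching relation and the special-amalgam identity. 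Thus $(S;T_1,T_2)$ is weakly embedded in the inverse semigroup $T$.

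For the converse, assume $(S;T_1,T_2)$ is weakly embedded in an inverse semigroup $T$ via monomorphisms $\psi_i\colon T_i\to T$ with $\phi_1\psi_1=\phi_2\psi_2$. The crucial first step is to show that the two images $V_1\psi_1$ and $V_2\psi_2$ coincide in $T$. The relation $\phi_1\psi_1=\phi_2\psi_2$ says that $S_1\psi_1$ and $S_2\psi_2$ are one and the same subset $\bar S$ of $T$; and because each $\psi_i$ is an injective homomorphism of inverse semigroups, its restriction to $V_i$ is an isomorphism onto the inverse subsemigroup of $T$ generated by $S_i\psi_i=\bar S$, that is, onto the inverse hull of $\bar S$ in $T$. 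This hull is intrinsic to $T$ and does not depend on $i$, whence $V_1\psi_1=V_2\psi_2=:W$. I would then set $\nu=(\psi_1|_{V_1})\circ(\psi_2|_{V_2})^{-1}\colon V_1\to V_2$, an isomorphism of inverse semigroups, and check directly that $s\phi_1\nu=s\phi_2$ for all $s\in S$, so that $(S;V_1,V_2)$ is a special amalgam.

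I expect the main obstacle to be precisely the equality $V_1\psi_1=V_2\psi_2$ in the converse: although the images of the \emph{non-inverse} core $S$ are forced to agree in $T$, one must argue carefully that the inverse subsemigroups they generate agree as well. This hinges on two facts that I would state explicitly, namely that each $\psi_i$ preserves inverses (so $V_i\psi_i$ is genuinely inverse-closed and contains $\bar S$), and that the restriction of an injective inverse-semigroup homomorphism to $V_i$ is an isomorphism onto the inverse hull of the image, identifying $V_i\psi_i$ with the intrinsically defined inverse hull of $\bar S$ in $T$. Once this is in place, the verifications of the two matching identities are routine one-line computations.
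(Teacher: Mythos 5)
Your proposal is correct and follows essentially the same route as the paper: the converse direction invokes Theorem \ref{Inv1} on the (disjointly renamed) inverse-semigroup amalgam over $V_1$ and then verifies $\phi_1\psi_1=\phi_2\psi_2$ by a one-line computation, exactly as the paper does with its auxiliary copy $V$ and isomorphisms $\gamma_i$; the forward direction defines the matching isomorphism as $(\psi_1|_{V_1})\circ(\psi_2|_{V_2})^{-1}$, which is the paper's map $\theta=\mu_1\mu_2^{-1}$. If anything, you are more careful than the paper at the one delicate point --- you justify $V_1\psi_1=V_2\psi_2$ by identifying both with the intrinsic inverse hull of $\bar S$ in the ambient inverse semigroup, a step the paper compresses into ``$\theta$ is clearly an isomorphism.''
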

\begin{proof}
    ($\implies$) Let $S$, $T_{1}$, $T_{2}$, $V_{1}$, $V_{2}$, and $\phi_{1}$, $\phi_{2}$ be as described in the statement of the theorem. Assume that $(S;T_{1},T_{2})$ is weakly embeddable in an inverse semigroup $W$, via monomorphisms $\mu_{1}:T_{1} \longrightarrow W$ and $\mu_{2}:T_{2} \longrightarrow W$. We shall denote $S\phi_{i},\, 1 \leq i \leq 2,$ by $S_{i}$.\vspace{2pt}
    
    Observe that any element of $V_{1}$ may be written in the form $x_{1} x_{2} \cdots x_{n}$, where $x_{1}, x_{2}, \dots, x_{n} \in S_{1} \cup S'_{1}$ and for all $1\leq i \leq n-1$ the elements $x_{i}, x_{i+1}$ are not both in $S_{1}$ or $S'_{1} \smallsetminus S_{1}$.  Similarly, the elements of $V_{2}$ can be written as
    $y_{1}y_{2} \cdots y_{m}$,
    where $y_{1},y_{2},\dots, y_{m} \in S_{2} \cup S'_{2}$ and for all $1\leq i \leq m-1$ the elements $y_{i}, y_{i+1}$ do not both belong to $S_{2}$ or $S'_{2} \smallsetminus S_{2}$. Also, for each $i \in \{1,2\}$ and $x \in S_i$ we have:
    \[
    (x^{-1})\mu_i = (x\mu_i)^{-1}, \hspace{6pt} \text{where } \,x^{-1} \in S'_i.
    \]
    We define $\theta:V_{1} \longrightarrow V_{2}$ by
    \begin{equation*}
        (x_{1}x_{2} \cdots x_{n})\theta = (x_{1}x_{2} \cdots x_{n})\mu_1 \mu_2^{-1}.
    \end{equation*}
    Then, $\theta$ is clearly an isomorphism from $V_1$ to $V_2$.\vspace{4pt}
    
\noindent ($\impliedby$) Let $(S; V_{1}, V_{2};\phi_{1},\phi_{2})$ be made into a special amalgam by the isomorphism $\nu:V_{1} \longrightarrow V_{2}$. Then
\begin{equation}\label{3.4}
    \phi_{1} \circ \nu = \phi_{2}.
\end{equation}\vspace{2pt}

\noindent Consider a semigroup $V$ admitting isomorphisms $\gamma_{i}:V \longrightarrow V_{i}$, for each $1 \leq i \leq 2$, with $V \cap V_{i} = \emptyset$, and
\begin{equation}\label{3.5}
\gamma_{1} \circ \nu = \gamma_{2};
\end{equation}
that is $(V;V_{1},V_{2})$ is a special amalgam. Then, being an inverse semigroups amalgam, $(V;T_{1}, T_{2}; \gamma_{1}, \gamma_{2})$ is embeddable in an inverse semigroup, say $W$, via monomorphisms, say, $\mu_{i}:T_{i}\longrightarrow W$. This implies that
\begin{equation}\label{3.6}
    \gamma_{1} \circ \mu_{1} = \gamma_{2} \circ \mu_{2}.
\end{equation}
Now, using (\ref{3.4}) and (\ref{3.5}), we have:
\begin{equation}\label{3.7}
 \phi_{1} \circ \gamma_{1}^{-1}=\phi_{1} \circ (\nu \circ \gamma_{2}^{-1})=(\phi_{1} \circ \nu) \circ \gamma_{2}^{-1}=\phi_{2} \circ \gamma_{2}^{-1}.
\end{equation}
Finally, using (\ref{3.6}) and (\ref{3.7}), we may calculate:
\begin{equation*}
    \phi_{1} \circ \mu _{1} =\phi_{1}\circ \gamma_{1}^{-1} \circ \gamma_{1} \circ \mu_{1} = \phi_{2} \circ \gamma_{2}^{-1} \circ \gamma_{2} \circ \mu_{2} =\phi_{2} \circ \mu_{2}.
\end{equation*}
Hence, $(S;T_{1},T_{2})$ is weakly embeddable.
\end{proof}

\subsection{Weak amalgamation over rich ample semigroups}

In this subsection, we introduce the notion of rich (right, left) ample semigroups. Given inverse semigroups (respectively, groups) $T_{1}$ and $T_{2}$, we show that an amalgam $(S;T_{1},T_{2})$ is weakly embeddable in an inverse semigroup (respectively, group) if $S$ is rich ample in $T_{1}$ and $T_{2}$. We begin by recalling that any inverse semigroup $S$ comes equipped with the \textit{natural partial order}:
\begin{equation*}
    \forall x,y \in S,\, x \leq y\, \text{ iff }\, x=ey, \text{ for some }\, e \in E(S).
\end{equation*}

\begin{remark} \label{min}
    Let $U$ be an inverse semigroup. Then $uu^{-1}$ is the minimum idempotent, with respect to the natural partial order, such that $(uu^{-1})u=u$. To see this, let $eu=u$ for some idempotent $e \in U$. Then $u^{-1}e=u^{-1}$, and we have
\begin{equation*}
    uu^{-1}=euu^{-1}e=uu^{-1}e.
\end{equation*}
This implies that $uu^{-1} \leq e$, and hence the assertion.
\end{remark}

\begin{definitions}\label{RRA&RLA}
A subsemigroup $S$ of an inverse semigroup $T$ is called \textit{rich
right ample} in $T$, if
\begin{equation*}
\forall x,y \in S,\, x^{-1}y \in S \cup S^{\prime },
\end{equation*}
where $S'=\{z^{-1}\in T:z\in S\}$ is the dual of $S$. We say that $S$ is \textit{rich left ample} in $T$ if 
\begin{equation*}
\forall x,y \in S,\, xy^{-1} \in S \cup S^{\prime }.
\end{equation*}
A subsemigroup of $T$ is called \textit{rich ample} in $T$ if it is both rich right and rich left ample in $T$. A submonoid $S$ of an inverse semigroup $T$ is rich (right, left) ample in $T$ if it is such as a subsemigroup. By saying that $S$ is a rich (left, right) ample subsemigroup of $T$ we shall mean that $S$ is rich (right, left) ample in $T$.

\end{definitions}

\begin{lemma}\label{U&U'}
Let $S,\, S'$ and $T$ be as defined above. Then
\begin{enumerate}
    \item $S$ is rich right (respectively, left) ample in $T$ if and only if (its dual) $S'$ is a rich left (respectively, right) ample in $T$,
    \item $S$ is (rich) ample in $T$ if and only if $S'$ is (rich) ample in $T$.
\end{enumerate}
\end{lemma}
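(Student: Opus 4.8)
The plan is to reduce everything to a single observation: passing to the dual is an involution, that is, $(S')' = S$ because $(s^{-1})^{-1} = s$ for every $s \in S$, together with the fact recorded earlier in this section that $S$ is right (respectively, left) ample in $T$ if and only if $S'$ is left (respectively, right) ample in $T$. Once these are in hand, both parts become bookkeeping with the map $s \mapsto s^{-1}$.

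For Part 1, I would argue directly from the defining conditions. Suppose $S$ is rich right ample, so that $x^{-1}y \in S \cup S'$ for all $x, y \in S$. To test whether $S'$ is rich left ample, take arbitrary $a, b \in S'$; I must show $ab^{-1} \in S' \cup (S')'$. Writing $a = x^{-1}$ and $b = y^{-1}$ with $x, y \in S$, I compute
\begin{equation*}
ab^{-1} = x^{-1}(y^{-1})^{-1} = x^{-1}y,
\end{equation*}
and since $(S')' = S$ the target set $S' \cup (S')'$ is exactly $S \cup S'$. Thus the rich-left-ample condition for $S'$ is, element by element, the very condition $x^{-1}y \in S \cup S'$ defining rich right ampleness of $S$; because the anti-isomorphism $\alpha$ is a bijection, the pair $(x^{-1},y^{-1})$ ranges over all of $S' \times S'$ as $(x,y)$ ranges over $S \times S$, so the two conditions are equivalent. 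The reverse implication is immediate, and the left/right-swapped statement (rich left ample for $S$ iff rich right ample for $S'$) follows by the identical substitution, this time using $a^{-1}b = (x^{-1})^{-1}(y^{-1}) = xy^{-1}$.

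For Part 2 in the rich case, rich ampleness of $S$ means $S$ is simultaneously rich right and rich left ample; by the two equivalences of Part 1 this holds exactly when $S'$ is simultaneously rich left and rich right ample, i.e.\ rich ample. For Part 2 in the plain ample case, I would simply invoke the fact quoted above: $S$ ample forces $S'$ to be both left and right ample, hence ample, and the converse follows by applying the same fact to $S'$ in place of $S$ and invoking $(S')' = S$.

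I do not anticipate a genuine obstacle here. The entire content is the involution $s \mapsto s^{-1}$ and the product identities $x^{-1}y = (x^{-1})(y^{-1})^{-1}$ and $xy^{-1} = (x^{-1})^{-1}(y^{-1})$. The only points requiring care are keeping the two ``respectively'' cases aligned throughout, and recording explicitly that $S' \cup (S')' = S \cup S'$ so that the target membership sets for $S$ and for $S'$ literally coincide.
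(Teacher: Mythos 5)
Your proof is correct, and since the paper dismisses this lemma with ``The proof is straightforward,'' your argument---reducing everything to the involution $(S')'=S$ and the identities $x^{-1}(y^{-1})^{-1}=x^{-1}y$ and $(x^{-1})^{-1}y^{-1}=xy^{-1}$, so that the membership conditions for $S$ and $S'$ literally coincide in the set $S\cup S'$---is precisely the routine verification the author intended. You also handle the only delicate points properly: the bijectivity of $s\mapsto s^{-1}$ ensures the quantifiers match up, and the converse directions follow by applying the forward implications to $S'$ and invoking the involution.
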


\begin{proof}
The proof is straightforward.
\end{proof}

\begin{proposition}\label{Invhull1}
\label{P1}  Let $S$ be a subsemigroup of an inverse semigroup $T$. Then $S$ is rich ample in $T$ if and only if the inverse hull of{\tiny \ \ \ }$S$ in $T$ equals $S \cup S^{\prime }$.
\end{proposition}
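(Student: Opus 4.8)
The plan is to prove the two inclusions that together give the claimed equality. Write $H$ for the inverse hull of $S$ in $T$, that is, the smallest inverse subsemigroup of $T$ containing $S$. One containment, $S \cup S' \subseteq H$, is immediate and needs no hypothesis on $S$: we have $S \subseteq H$ by definition, and since $H$ is closed under the inversion map of $T$, every $s^{-1}$ with $s \in S$ also lies in $H$, whence $S' \subseteq H$. The actual content is to show that, assuming $S$ is rich ample, the reverse inclusion $H \subseteq S \cup S'$ holds, and conversely that the equality $H = S \cup S'$ forces rich ampleness.

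For the forward direction I would show that $S \cup S'$ is itself an inverse subsemigroup of $T$; since it contains $S$ and $H$ is the smallest such, this yields $H \subseteq S \cup S'$. First observe that $S \cup S'$ is closed under the inversion map of $T$: the inverse of $s \in S$ lies in $S'$, and the inverse of $s^{-1} \in S'$ is $s \in S$. Hence it suffices to check closure under multiplication, which I would handle by a four-case analysis according to whether each factor lies in $S$ or in $S'$. Writing factors via $x, y \in S$: the product $xy$ lies in $S$ because $S$ is a subsemigroup; the product $x^{-1}y^{-1} = (yx)^{-1}$ lies in $S'$ because $yx \in S$ and inversion is an anti-automorphism; while the two mixed products $xy^{-1}$ and $x^{-1}y$ land in $S \cup S'$ precisely by the rich left and rich right ample conditions of Definitions \ref{RRA&RLA}. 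Thus $S \cup S'$ is closed under both operations and is an inverse subsemigroup, and the inclusion follows.

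For the converse I would assume $H = S \cup S'$ and recover the two defining conditions. Given $x, y \in S$, both $x^{-1}$ and $y^{-1}$ lie in $S' \subseteq H$, so the products $x^{-1}y$ and $xy^{-1}$ are products of elements of the inverse subsemigroup $H$ and therefore lie in $H = S \cup S'$; these are exactly the rich right and rich left ample conditions, so $S$ is rich ample.

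I do not expect a genuine obstacle here: once one notices that $S \cup S'$ is automatically closed under inversion, the argument reduces to the routine multiplication case analysis. The only point worth flagging is that all four cases must be examined, and that precisely the two mixed cases $SS'$ and $S'S$ consume, respectively, the rich left and rich right halves of the hypothesis; so both halves of rich ampleness are genuinely needed, whereas the pure cases $SS$ and $S'S'$ use only that $S$ is a subsemigroup and that inversion reverses products.
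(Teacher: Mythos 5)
Your proof is correct and is precisely the routine argument the paper has in mind when it dismisses the proof as ``straightforward'': one shows $S \cup S'$ is closed under inversion and, via the four-case multiplication analysis in which the mixed cases consume exactly the rich right and rich left ample conditions, is an inverse subsemigroup, so it coincides with the hull, and conversely the equality immediately yields both conditions. No gaps, and no divergence from the intended approach.
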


\begin{proof}
    The proof is straightforward.
\end{proof}

\begin{lemma}\label{4.6}
Let $S$ be a rich ample subsemigroup of an inverse semigroup $T$. Then $S$ and $S'$ are the down closed subsemigroups of $S \cup S'$ with respect to the natural partial order.
\end{lemma}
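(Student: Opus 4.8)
The plan is to lean on Proposition~\ref{Invhull1}, which tells us that the inverse hull of $S$ in $T$ is exactly $U := S \cup S'$. Since $U$ is an inverse subsemigroup of $T$, it is itself an inverse semigroup, and its natural partial order agrees with the one inherited from $T$: for $x,y \in U$ one has $x \le y$ iff $x = ey$ for some idempotent $e \in E(U)$, and the witnessing idempotent may always be taken to be $xx^{-1}$, which lies in $U$. The decisive preliminary observation is that these idempotents already sit inside both $S$ and $S'$; indeed, as recorded at the beginning of Section~4, every idempotent is self-inverse, so $E(U) = E(S) = E(S') \subseteq S \cap S'$.

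Granting this, the down-closedness of $S$ is nearly immediate. That $S$ is a subsemigroup of $U$ is clear. To see it is down closed, take $y \in S$ and $x \in U$ with $x \le y$, and write $x = ey$ with $e \in E(U)$. Because $e \in E(S) \subseteq S$ and $y \in S$, the product $x = ey$ lies in $S$. (One could equally invoke the standard identity $x = xx^{-1}y$ and note $xx^{-1} \in E(U) \subseteq S$.) Hence $S$ is a down-closed subsemigroup of $U$.

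For $S'$ I would appeal to Lemma~\ref{U&U'}, by which $S'$ is again rich ample in $T$, together with the evident fact that $(S')' = S$, so that the inverse hull of $S'$ is once more $U = S' \cup S$. Repeating the previous paragraph with the roles of $S$ and $S'$ exchanged---using $E(U) \subseteq S'$ in place of $E(U) \subseteq S$---shows that $S'$ too is a down-closed subsemigroup of $U$.

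There is no real obstacle here: the whole argument collapses once one notices that the idempotents governing the natural partial order lie in $S \cap S'$, after which closure of $S$ (respectively $S'$) under multiplication forces the down-closedness. The only point deserving a line of care is that the natural partial order computed inside $U$ coincides with its restriction from $T$, which holds because for any $x \in U$ the idempotent $xx^{-1}$ already belongs to $U$.
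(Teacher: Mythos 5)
Your proof is correct and takes essentially the same approach as the paper's: both write $x \le y$ as $x = ey$ with $e \in E(S \cup S')$, observe that every such idempotent lies in $S \cap S'$, and conclude by closure of $S$ (respectively $S'$) under multiplication. Your extra remarks---that Proposition~\ref{Invhull1} guarantees $S \cup S'$ is an inverse semigroup, and that its natural partial order is the restriction of that of $T$---are harmless elaborations of points the paper leaves implicit.
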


\begin{proof}
    Let $s \in S$ and $s' \in S'$ be such that $s' \leq s$, where $\leq$ denotes the natural partial order on the inverse semigroup $S \cup S'$. Then $s'=es$ for some idempotent $e \in E(S \cup S')$. Now, because $e \in S \cap S'$, it follows (in particular) from $e \in S$ that $s'=es \in S$. Similarly, one can show that $s \leq s'$ implies that $s \in S'$.
\end{proof}

\begin{proposition}
\label{Prop1}  A subsemigroup $S$ of an inverse semigroup $T$ is rich right ample if and only if 

\begin{enumerate}
\item $S$ is right ample in $T$, and 

\item for all $x,y\in S$, $x^{-1}y=x^{-1}xa$, \,where $a\in S\cup S^{\prime }$.
\end{enumerate}
\end{proposition}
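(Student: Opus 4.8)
The plan is to treat the two implications separately, observing that the forward direction is almost immediate while the reverse direction splits into two cases, the second of which carries essentially all the content.

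For the forward implication I would assume $S$ is rich right ample, so that $x^{-1}y \in S \cup S'$ for all $x,y \in S$. To establish condition (1), specialize $y=x$ to get $x^{-1}x \in S \cup S'$. Since $x^{-1}x$ is idempotent and every idempotent of $S \cup S'$ already lies in $S$ (an idempotent $e=z^{-1}\in S'$ satisfies $z=e^{-1}=e\in S$, so $e\in S\cap S'$, in agreement with the identity $E(S)=E(S')$ noted earlier), we conclude $x^{-1}x \in S$; hence $S$ is right ample in $T$. For condition (2), I would simply take $a:=x^{-1}y\in S\cup S'$ and verify the required form via $x^{-1}x\,a = x^{-1}x\,x^{-1}y = x^{-1}y$, using $x^{-1}xx^{-1}=x^{-1}$.

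For the reverse implication I would assume (1) and (2) and fix $x,y\in S$. By (2), $x^{-1}y=x^{-1}x\,a$ with $a\in S\cup S'$, and by (1) the idempotent $x^{-1}x$ lies in $S$. If $a\in S$, then $x^{-1}x\,a$ is a product of two elements of $S$, hence lies in $S\subseteq S\cup S'$. If instead $a=b^{-1}$ for some $b\in S$, I would rewrite, using that the idempotent $x^{-1}x$ is its own inverse, $x^{-1}x\,b^{-1}=(b\,x^{-1}x)^{-1}$; since $b\,x^{-1}x\in S$, its inverse lies in $S'$. In either case $x^{-1}y\in S\cup S'$, so $S$ is rich right ample.

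The only delicate point is the case $a\in S'$ in the reverse direction: the product $x^{-1}x\,a$ is not manifestly in $S$ or in $S'$, and the step that resolves this is the transposition identity $x^{-1}x\,b^{-1}=(b\,x^{-1}x)^{-1}$, which is valid precisely because $x^{-1}x$ coincides with its own inverse. Once this reformulation is spotted, closure of $S$ under multiplication finishes the argument, and everything else reduces to routine manipulation with inverses.
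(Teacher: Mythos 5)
Your proof is correct and follows essentially the same route as the paper: forward direction by specializing $y=x$ and taking $a=x^{-1}y$, reverse direction by placing $x^{-1}xa$ in $S$ or $S'$ according to where $a$ lies. Your explicit case split with the identity $x^{-1}x\,b^{-1}=(b\,x^{-1}x)^{-1}$ just unpacks what the paper states in one line, namely that $x^{-1}x\in S\cap S'$ and both $S$ and its dual $S'$ are subsemigroups of $T$.
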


\begin{proof}
    ($\implies$) Let $S$ be rich right ample in $T$. Then for all $x \in U$, the element $x^{-1}x$ belongs to $S \cup S'$. Because $x^{-1}x$ is an idempotent, we in fact have $x^{-1}x \in S \cap S' \subseteq S$, meaning that $S$ is right ample in $T$. Next, let $x,y \in S$ and observe that
    \[    x^{-1}y=x^{-1}xx^{-1}y=x^{-1}xa,
    \]
    where $a=x^{-1}y \in S \cup S'$. Hence the second condition is also satisfied. \vspace{2pt}
    
    \noindent ($\impliedby$) Let $S$ be a subsemigroup of an inverse semigroup $T$ satisfying Conditions (1) and (2) of the proposition. Let $x,y\in S$. Then by the second condition, $x^{-1}y=x^{-1}xa$, where $a \in S \cup S'$. Now, $x^{-1}x \in S \cap S'$, because, by the first condition, $S$ is right ample. This implies that
    \begin{equation*}
        x^{-1}y=x^{-1}xa \in S \cup S'\text{,}
    \end{equation*}
    whence $S$ is rich right ample in $T$.
\end{proof}

\begin{proposition}
\label{Prop2}  A subsemigroup $S$ of an inverse semigroup $T$ is rich left ample if and only if 

\begin{enumerate}
\item $S$ is left ample in $T$, and 

\item for all $x,y\in S$, $xy^{-1}=byy^{-1}$, \,where $b\in S\cup S^{\prime }$
. 
\end{enumerate}
\end{proposition}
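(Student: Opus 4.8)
The statement is the exact left--right dual of Proposition \ref{Prop1}, so the plan is to mirror that proof, interchanging the roles of the source idempotent $x^{-1}x$ and the range idempotent $yy^{-1}$. In principle one could instead invoke the duality machinery directly: by Lemma \ref{U&U'} the semigroup $S$ is rich left ample if and only if $S'$ is rich right ample, and one could then apply Proposition \ref{Prop1} to $S'$. However, the anti-isomorphism $\alpha$ reverses products, so the condition obtained this way would be phrased in terms of $xx^{-1}$ rather than $yy^{-1}$; to land on condition (2) exactly as stated it is cleaner to argue directly. I will prove the two implications separately.

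For the forward implication I would assume $S$ is rich left ample, i.e.\ $xy^{-1}\in S\cup S'$ for all $x,y\in S$. Specialising to $x=y$ gives $yy^{-1}\in S\cup S'$; since $yy^{-1}$ is idempotent and $E(S)=E(S')\subseteq S\cap S'$, in fact $yy^{-1}\in S$, which is exactly left ampleness and establishes (1). For (2) the key computation is $xy^{-1}=xy^{-1}\,yy^{-1}$, which holds because $y^{-1}yy^{-1}=y^{-1}$; setting $b=xy^{-1}$, rich left ampleness gives $b\in S\cup S'$, so $xy^{-1}=byy^{-1}$ with $b\in S\cup S'$, as required.

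For the reverse implication I would assume (1) and (2) and fix $x,y\in S$. By (2) we may write $xy^{-1}=byy^{-1}$ with $b\in S\cup S'$. Left ampleness (1) gives $yy^{-1}\in S$, and since $yy^{-1}$ is idempotent we also have $yy^{-1}\in E(S')\subseteq S'$, so $yy^{-1}\in S\cap S'$. A short case analysis then finishes the argument: if $b\in S$ then $byy^{-1}\in S$ because $S$ is a subsemigroup, while if $b\in S'$ then $byy^{-1}\in S'$ because $yy^{-1}\in S'$ and $S'$ is a subsemigroup. Either way $xy^{-1}\in S\cup S'$, so $S$ is rich left ample.

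The only genuinely delicate point---the \emph{obstacle}, such as it is---is ensuring that the product $byy^{-1}$ does not escape $S\cup S'$. This is precisely where one must use that $yy^{-1}$ lies in \emph{both} $S$ and $S'$ (equivalently $E(S)=E(S')$, as observed in Section 4) together with the fact that $S$ and $S'$ are each subsemigroups of $T$; the remaining manipulations of inverses are entirely routine.
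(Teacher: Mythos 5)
Your proof is correct and is precisely what the paper intends: its proof of Proposition \ref{Prop2} is simply ``similar to the proof of the above proposition,'' and your argument is the faithful left--right dualization of the paper's proof of Proposition \ref{Prop1}, including the same key points (taking $x=y$ to get $yy^{-1}\in S$, choosing $b=xy^{-1}$, and using $yy^{-1}\in S\cap S'$ for the case analysis in the converse). No gaps; the remark about why you argue directly rather than via Lemma \ref{U&U'} is a sensible clarification, not a deviation.
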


\begin{proof}
Similar to the proof of the above proposition.
\end{proof}

\begin{remark}\label{4.8}
    If $\phi: T_{1} \longrightarrow T_{2}$ is a homomorphism of inverse semigroups and $S$ is rich right (left) ample in $T_{1}$ then it can be easily verified that such is $S\phi$ in $T_{2}$.
\end{remark}

\begin{theorem}\label{1RAmple}
    Let $T_{1}$ and $T_{2}$ be inverse semigroups admitting monomorphisms $\phi_{1}$ and $\phi_{2}$ from a semigroup $S$, respectively, such that $S\phi_{1}$ is rich right ample in $T_{1}$ but $S\phi_{2}$ is not such in $T_{2}$. Then the amalgam $(S;T_{1}, T_{2})$ fails to embed weakly in any inverse semigroup.
\end{theorem}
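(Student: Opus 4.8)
The plan is to reduce to Proposition~\ref{VG} and then to observe that rich right ampleness is preserved by isomorphisms of inverse semigroups. I would argue by contradiction: suppose $(S;T_1,T_2)$ embeds weakly in some inverse semigroup. By Proposition~\ref{VG} this is equivalent to $(S;V_1,V_2)$ being a special amalgam, where $V_i$ denotes the inverse hull of $S\phi_i$ in $T_i$; thus there is an isomorphism $\nu:V_1\longrightarrow V_2$ with $s\phi_1\nu=s\phi_2$ for all $s\in S$. Writing $S_i=S\phi_i$, the aim is to transport the rich right ample structure of $S_1$ along $\nu$ and conclude that $S_2$ must be rich right ample in $V_2$, contradicting the hypothesis.

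First I would record two facts that power the transfer. Being a homomorphism of inverse semigroups, $\nu$ preserves inverses, so $x^{-1}\nu=(x\nu)^{-1}$ for every $x\in V_1$ (the inverse-preservation already used in Lemma~\ref{Rmrk3.1}). Together with $S_1\nu=S_2$, which is immediate from $s\phi_1\nu=s\phi_2$, this gives $S_1'\nu=S_2'$, so that $\nu$ restricts to a bijection of $S_1\cup S_1'$ onto $S_2\cup S_2'$. I would also note that, since each $V_i$ is an inverse subsemigroup of $T_i$, inverses taken in $V_i$ coincide with those in $T_i$ and the dual $S_i'$ is the same whether formed inside $V_i$ or $T_i$; hence ``rich right ample in $T_i$'' and ``rich right ample in $V_i$'' may be used interchangeably.

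The transfer itself is then short. Given arbitrary $x,y\in S_2$, set $\bar x=x\nu^{-1}$ and $\bar y=y\nu^{-1}$ in $S_1$. Rich right ampleness of $S_1$ yields $\bar x^{-1}\bar y\in S_1\cup S_1'$, and applying $\nu$ together with inverse preservation gives
\begin{equation*}
x^{-1}y=(\bar x\nu)^{-1}(\bar y\nu)=(\bar x^{-1}\bar y)\nu\in(S_1\cup S_1')\nu=S_2\cup S_2'.
\end{equation*}
Hence $S_2$ is rich right ample in $V_2$, and therefore in $T_2$, contradicting the assumption that $S\phi_2$ is not rich right ample in $T_2$. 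The amalgam therefore cannot embed weakly in any inverse semigroup.

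The step I expect to require the most care is not any single computation but the bookkeeping around $\nu$: one must verify that $\nu$ genuinely carries the dual $S_1'$ onto $S_2'$ (which is exactly where inverse preservation is essential) and that the defining membership $x^{-1}y\in S\cup S'$ is insensitive to passing between $T_i$ and its inverse hull $V_i$. Once these compatibilities are nailed down, the invariance of the condition under the isomorphism $\nu$ is purely formal, and the contradiction is immediate.
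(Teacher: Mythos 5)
Your proof is correct and takes essentially the same approach as the paper: argue by contradiction, invoke Proposition~\ref{VG} to obtain the special-amalgam isomorphism $\nu:V_1\longrightarrow V_2$, and transport the rich right ampleness of $S\phi_1$ along $\nu$ to contradict the hypothesis on $S\phi_2$. The only difference is presentational: the paper compresses your explicit transfer computation (inverse preservation, $S_1'\nu=S_2'$, and the insensitivity of the defining condition to passing between $T_i$ and $V_i$) into a one-line appeal to the fact, recorded in Remark~\ref{4.8}, that rich right ampleness is preserved by homomorphisms of inverse semigroups.
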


\begin{proof}
   Let $S, T_1, T_2, \phi_1$ and $\phi_2$ be as described in the statement of the Theorem. Assume, on the contrary, that $(S;T_1,T_2)$ is weakly embeddable in an inverse semigroup. Let $(S;V_1,V_2)$ be the special amalgam considered in Proposition \ref{VG}. Clearly, $S_1 = S\phi_1$ is rich right ample in $V_1$. But then its image $S_1\mu = S\phi_2$ must be such in $V_2$ and hence in $T_2$, a contradiction.
\end{proof}
    
\begin{remark}
    The dual statement, obtained by replacing 'rich right ample' with 'rich left ample' in Theorem \ref{1RAmple}, can be proved on similar lines.
\end{remark}

\begin{lemma}\label{3.14}
    Let $T_{1}$ and $T_{2}$ be inverse semigroups containing isomorphic copies $S_{1}$ and $S_{2}$ of a semigroup $S$, respectively, that are rich ample in the respective oversemigroups. Then the postes $S_{1} \cup S'_{1}$ and $S_{2} \cup S'_{2}$ are order-isomorphic.
\end{lemma}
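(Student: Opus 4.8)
The plan is to show that the bijection $\psi : S_1 \cup S'_1 \longrightarrow S_2 \cup S'_2$ furnished by Lemma~\ref{Rmrk3.1} is itself an order-isomorphism for the natural partial orders on the two posets. First I would record that, by Proposition~\ref{Invhull1}, the rich ampleness of $S_i$ makes $V_i := S_i \cup S'_i$ the inverse hull of $S_i$ in $T_i$; hence each $V_i$ is an inverse semigroup carrying the natural partial order inherited from $T_i$, and the statement is exactly that $\psi$ is an order-isomorphism $V_1 \to V_2$. A preliminary observation is that $E(V_i) = E(S_i) = E(S'_i) \subseteq S_i \cap S'_i$: indeed, any idempotent of $V_i$ that lies in $S'_i$ has the form $s^{-1}$ with $s$ necessarily idempotent, whence it equals $s \in S_i$.

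Second, I would localize the order to the two blocks $S_i$ and $S'_i$. By Lemma~\ref{4.6} both $S_i$ and $S'_i$ are down-closed in $V_i$; consequently any comparable pair $x \leq y$ in $V_1$ lies together in $S_1$ or together in $S'_1$, since if $y \in S_1$ then down-closedness forces $x \in S_1$, and dually for $S'_1$. It therefore suffices to prove that $\psi$ preserves and reflects order separately on $S_1$ and on $S'_1$.

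The crux is the third step: arguing that, restricted to $S_1$, the natural partial order is intrinsic to the abstract semigroup $S_1$. For $x,y \in S_1$ one has $x \leq y$ in $V_1$ if and only if $x = ey$ for some $e \in E(V_1) = E(S_1)$, a condition phrased entirely in terms of the multiplication of $S_1$ and its idempotents. Since $\psi|_{S_1} = \phi$ is a semigroup isomorphism onto $S_2$ and isomorphisms carry $E(S_1)$ bijectively onto $E(S_2) = E(V_2)$, this condition is both preserved and reflected by $\phi$; the identical argument on $S'_1$ uses $\psi|_{S'_1} = \phi'$ together with $E(S'_1) = E(S_1)$. Combined with the block decomposition of the second step, this shows that $x \leq y$ implies $x\psi \leq y\psi$.

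Finally, since the hypotheses are symmetric in $T_1$ and $T_2$ and $\psi^{-1} = \phi^{-1} \cup \phi'^{-1}$ has exactly the same form (again respecting inverses by Lemma~\ref{Rmrk3.1}), the argument applied to $\psi^{-1}$ shows it too is monotone, so $\psi$ is an order-isomorphism. I expect the main obstacle to be precisely the third step: one must check that the defining idempotent $e$ in the relation $x = ey$ can always be taken inside $S_1$, equivalently that $E(V_i) \subseteq S_i$, since it is this fullness that renders the restricted order intrinsic and hence transportable along the abstract isomorphism $\phi$, even though $\psi$ need not be a semigroup homomorphism on all of $V_1$.
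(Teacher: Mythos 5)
Your proposal is correct and takes essentially the same approach as the paper: the paper's own (one-line) proof likewise takes the bijection $\psi = \phi \cup \phi'$ from Lemma~\ref{Rmrk3.1} and deduces from the down-closedness Lemma~\ref{4.6} that $\psi$ is an order-embedding, hence an order-isomorphism. Your write-up simply makes explicit the steps the paper leaves tacit --- that $E(S_i \cup S'_i) = E(S_i)$, that by down-closedness any comparable pair lies together in $S_i$ or in $S'_i$, and that on each block the natural order is intrinsic and therefore transported by the isomorphisms $\phi$ and $\phi'$ --- all of which are verified correctly.
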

\begin{proof}
    Let the map
    \begin{equation*}
        \psi=\phi \cup \phi':S_{1} \cup S'_{1} \longrightarrow S_{2} \cup S'_{2} 
    \end{equation*}
    be as defined in Lemma \ref{Rmrk3.1}. Then, it follows from the Lemma \ref{4.6} that $\psi$ is indeed an order-embedding.
\end{proof}

\begin{theorem}\label{4.9}
    Let $T_{1}$ and $T_{2}$ be inverse semigroups containing isomorphic copies, say $S\phi_{1}$ and $S\phi_{2}$, respectively, of a semigroup $S$. Assume also that $S\phi_{i}$ is rich ample in $T_{i}$ for each $ i \in \{1,2 \}$. If $S$ is not inverse then the amalgam $(S;T_{1}, T_{2}; \phi_{1},\phi_{2})$ is weakly (but not strongly) embeddable in an inverse semigroup.
\end{theorem}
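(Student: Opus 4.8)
The plan is to split the statement into its two halves: weak embeddability in an inverse semigroup, and failure of strong embeddability. The second half is cheap; the first half carries all the work.

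For the weak part I would invoke Proposition \ref{VG}, which reduces weak embeddability in an inverse semigroup to showing that $(S;V_1,V_2)$ is a special amalgam, where $V_i$ is the inverse hull of $S\phi_i$ in $T_i$. By Proposition \ref{Invhull1}, the rich ample hypothesis gives the explicit description $V_i=S_i\cup S_i'$, writing $S_i=S\phi_i$. Lemma \ref{Rmrk3.1} already furnishes a bijection $\psi=\phi\cup\phi':S_1\cup S_1'\longrightarrow S_2\cup S_2'$ (with $\phi=\phi_1^{-1}\phi_2$ the induced isomorphism $S_1\to S_2$) which preserves inverses and, by construction, satisfies $(s\phi_1)\psi=s\phi_2$ for all $s\in S$. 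Thus $\psi$ is the only candidate for the required isomorphism $\nu$, and the whole content of the weak part is to verify that $\psi$ is a semigroup homomorphism, not merely a bijection; granting this, $(S;V_1,V_2)$ is a special amalgam and Proposition \ref{VG} delivers weak embeddability.

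To check that $\psi$ preserves products I would split $V_1=S_1\cup S_1'$ into its four product types. Two are free: $S_1S_1\subseteq S_1$ is handled because $\psi|_{S_1}=\phi$ is an isomorphism, and $S_1'S_1'\subseteq S_1'$ follows by writing $u^{-1}v^{-1}=(vu)^{-1}$ and using that $\psi$ preserves inverses. The real work is the two mixed products, $su^{-1}$ (which lies in $S_1\cup S_1'$ by rich left ampleness) and $u^{-1}s$ (in $S_1\cup S_1'$ by rich right ampleness); I expect this to be the main obstacle, as it is exactly here that ``rich'' is used beyond plain ampleness. The key auxiliary fact I would establish first is that $\phi$ sends $uu^{-1}$ to $(u\phi)(u\phi)^{-1}$ and $u^{-1}u$ to $(u\phi)^{-1}(u\phi)$: since $S_i$ is ample, every idempotent of $V_i$ already lies in $S_i$, and by Remark \ref{min} these are the minimum idempotents acting as one-sided identities on $u$; as $\phi$ is an isomorphism of $S_1$ onto $S_2$ it preserves this purely multiplicative, order-theoretic characterization, hence matches the idempotents. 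Granting this, for the case $su^{-1}=w\in S_1$ I would multiply $w=su^{-1}$ on the right by $u$ to land inside $S_1$, apply $\phi$ to the resulting $S_1$-equation, then right-multiply by $(u\phi)^{-1}$ and simplify using $w\,uu^{-1}=w$ to obtain $(su^{-1})\phi=(s\phi)(u\phi)^{-1}=(s\psi)(u^{-1}\psi)$. The case $su^{-1}\in S_1'$ reduces to the previous one applied to $us^{-1}\in S_1$ together with inverse-preservation, and $u^{-1}s$ is dual. This shows $\psi=\nu$ is an isomorphism with $\phi_1\nu=\phi_2$.

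For the failure of strong embeddability I would note that rich ample implies ample: rich right ampleness yields right ampleness (Proposition \ref{Prop1}) and rich left ampleness yields left ampleness (Proposition \ref{Prop2}), so $S_1=S\phi_1$ is both right and left ample in the inverse semigroup $T_1$. Since $S$ is non-inverse, Theorem \ref{Non-embeddable1} applies verbatim: $(S;T_1)$ is an antiamalgamation pair for the class of inverse semigroups, so $(S;T_1,T_2;\phi_1,\phi_2)$ is non-embeddable in any semigroup, and in particular not strongly embeddable in an inverse semigroup. Combining the two halves gives the claim.
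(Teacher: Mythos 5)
Your proposal is correct, and its skeleton coincides with the paper's: both reduce, via Proposition \ref{VG} and Proposition \ref{Invhull1}, to showing that the bijection $\psi=\phi\cup\phi'$ of Lemma \ref{Rmrk3.1} is multiplicative on $V_1=S_1\cup S_1'$; both dispose of the pure product types $S_1S_1$ and $S_1'S_1'$ immediately; both hinge on matching $uu^{-1}$ and $u^{-1}u$ across $\phi$ through the minimum-idempotent characterization of Remark \ref{min}; and both derive the failure of strong embeddability from Propositions \ref{Prop1} and \ref{Prop2} together with Theorem \ref{Non-embeddable1}. You genuinely diverge, however, at the two places where the paper deploys extra machinery. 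First, for the idempotent-matching step the paper proves that $\psi$ is an order-isomorphism of the full posets $S_1\cup S_1'$ and $S_2\cup S_2'$ (Lemma \ref{3.14}, resting on the down-closedness Lemma \ref{4.6}, hence on richness); you observe instead that every idempotent of $V_i$ lies in $S_i$, where the natural order on idempotents is defined purely multiplicatively, so the isomorphism $\phi$ alone preserves the minimum --- this bypasses Lemmas \ref{4.6} and \ref{3.14} entirely and is strictly more elementary. Second, for the mixed products the paper invokes the decomposition $xy^{-1}=byy^{-1}$ of Proposition \ref{Prop2} and, in the case $xy^{-1}\in S_1'\smallsetminus S_1$, the rich right ampleness of the dual $S_1'$, running two separate chains of identities; you instead multiply $w=su^{-1}$ on the right by $u$ to transfer the equation into $S_1$, apply $\phi$, and cancel using $w\,uu^{-1}=w$, then settle the $S_1'$ case by taking inverses of the $S_1$ case and citing the inverse-preservation clause of Lemma \ref{Rmrk3.1}, which halves the case analysis. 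Your route thus localizes the use of richness to the single fact that $V_i=S_i\cup S_i'$ (i.e., that mixed products land in $S_i\cup S_i'$ at all), whereas the paper exercises the structural apparatus it has built; both arguments are sound, and yours is arguably shorter and more transparent.
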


\begin{proof}
    Let $S_{1}=S\phi_{1}$ and $S_{2}=S \phi_{2}$. Then, by Proposition \ref{Invhull1}, the inverse hull of $S_{i}$ in $T_{i}$, $1 \leq i \leq 2$, is $S_{i} \cup S'_{i}$.  The main objective will be proving that $S_{1}\cup S'_{1}$ and 
    $S_{2}\cup S'_{2}$ are isomorphic. We shall prove, to this end, that the poset order-isomorphism $\psi:S_{1} \cup S'_{1} \longrightarrow S_{2} \cup S'_{2}$, considered in Lemma \ref{3.14}, is a homomorphism of semigroups.
    
    Clearly, if $x,y \in S_{1}$ (equivalently, $S'_{1}$) then $\psi(xy)=\psi(x)\psi(y)$.
    We prove that $(xz)\psi = (x) \psi (z) \psi$, for all $x \in S_1$ and $z \in S'_1$---that $(zx)\psi = (z)\psi (x)\psi$ will follow from the symmetry. If $x \in S_{1}$ and $z \in S'_{1}$ then $z=y^{-1}$ for some $y \in S_{1}$ and we observe, by Proposition \ref{Prop2}, that
    
    \begin{equation*}    (xz)\psi = (xy^{-1})\psi=(byy^{-1})\psi=(b)\psi(yy^{-1})\psi, \,\, \text{where }\, b\in S \cup S'.
    \end{equation*}
    
    \noindent We first show that $(yy^{-1})\psi= (y)\psi(y^{-1})\psi$. To this end, let us first recall from Remark \ref{min} that
    \begin{equation}\label{3.1}
        yy^{-1}=\text{min}\,\{e \in E_{1} : ey=y\},
    \end{equation}
    where $E_{1}=E(S_{1} \cup S'_{1})$. Then, we note that the restriction of $\psi$ to $E_{1}$ is identity and $ey=y$ for $e \in E_{1}$ if and only if $(ey)\psi=(e\psi) (y\psi) = y\psi$. Thus
    \begin{equation}\label{3.2}
        (\{e \in E_{1}: ey =y\})\psi=\{e\psi \in E_{1}\psi:  (e)\psi(y)\psi = y\psi\}.
    \end{equation}
    Now, recall from Lemma \ref{3.14} the map $\psi:S_{1} \cup S'_{1} \longrightarrow S_{2} \cup S'_{2}$ is an order-isomorphism of posets, whence we may write from (\ref{3.1}) and (\ref{3.2})
    \begin{equation*}
    \begin{split}
        (yy^{-1})\psi&= (\text{min}\{e \in E_{1}: ey =y\})\psi
        \vspace{6pt} \\
        &=\text{min}(\{e \in E_{1}: ey =y\})\psi
        \vspace{6pt} \\
        &=\text{min}\, \{e\psi \in E_{1}\psi:  (e)\psi(y)\psi = y\psi\}
        \vspace{6pt} \\
        &= (y)\psi (y\psi)^{-1} \text{,} \hspace{15pt}\text{since } E_{1}\psi = E(S_{2} \cup S'_{2}) \text{,}\\
        &= (y)\psi(y^{-1})\psi \text{,} \hspace{15pt}\text{by Lemma } \ref{Rmrk3.1}.
    \end{split}
    \end{equation*}
    
Coming back to proving that $(xz)\psi = (x)\psi (z) \psi$, we consider, in view of Definitions \ref{RRA&RLA}, two cases.\vspace{2pt}

\noindent If $ xy^{-1} = b \in S_1$, then  we have:
\begin{equation*}
\begin{split}
    (xz)\psi &= (xy^{-1})\psi =  (byy^{-1})\psi = (b) \psi (yy^{-1})\psi \\
    &= (b) \psi (y)\psi (y^{-1})\psi = (by) \psi (y^{-1})\psi \\
    &= (byy^{-1}y) \psi (y^{-1})\psi = (xy^{-1}y) \psi (y^{-1})\psi \\
    &= (x) \psi (y^{-1}yy^{-1}) \psi = (x) \psi (y^{-1}) \psi = (x) \psi (z) \psi.
\end{split}
\end{equation*}

\noindent On the other hand, if $ xy^{-1} \in S'_1 \smallsetminus S_1$, then, using the right rich ampleness of $S'_1$, we first write
\[
xz = xy^{-1} = (x^{-1})^{-1}y^{-1} = (x^{-1})^{-1}x^{-1} a.
\]
Note that $a\in S'_1 \smallsetminus S_1$, for otherwise we get $xy^{-1} \in S_1$, a contradiction. Now, one may calculate:
\begin{equation*}
\begin{split}
 (xz)\psi &=  (xy^{-1})\psi = ((x^{-1})^{-1}y^{-1}) \psi \\
 &= ((x^{-1})^{-1}x^{-1} a)\psi = ((x^{-1})^{-1}x^{-1})\psi (a)\psi \\
 &= (xx^{-1})\psi (a)\psi = (x)\psi (x^{-1})\psi (a)\psi \\
 &= (x)\psi (x^{-1}a)\psi = (x)\psi (x^{-1}xx^{-1}a)\psi  \\
 &= (x)\psi (x^{-1}xy^{-1})\psi = (x)\psi (x^{-1}x)\psi(y^{-1})\psi \\
 &= (xx^{-1}x)\psi(y^{-1})\psi = (x)\psi(y^{-1})\psi \\
 &= (x) \psi(z)\psi
\end{split}
\end{equation*}

\noindent This completes the proof that $\psi:S_{1} \cup S'_{1} \longrightarrow S_{2} \cup S'_{2}$ is an isomorphism of semigroups. That $(S;T_{1},T_{2})$ is weakly embeddable follows from Proposition \ref{VG}. The amalgam $(S;T_{1},T_{2})$, however, fails to embed strongly by Propositions \ref{Prop1} and \ref{Prop2}, and Theorem \ref{Non-embeddable1} (alternatively, Theorem \ref{Non-embeddable2}).
\end{proof}

\begin{corollary}
    Consider an amalgam $\mathcal{A} = (S;G_{1},G_{2};\phi_{1}, \phi_{2})$, in which $G_{1}$ and $G_{2}$ are groups and $S$ is not a group. Let $S\phi_{i}$ be rich ample in $G_{i}$ for each $i\in \{1,2\}$. Then $\mathcal{A}$ is weakly embeddable in a group.
\end{corollary}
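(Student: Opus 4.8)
The plan is to run the machinery of Theorem \ref{4.9} and Proposition \ref{VG} entirely inside the category of groups, after first checking that the hypotheses of Theorem \ref{4.9} are met. The only fact about groups I would isolate at the outset is the following elementary observation: every inverse subsemigroup $U$ of a group $G$ is in fact a subgroup of $G$. Indeed, if $u \in U$ and $u^{\ast}$ denotes its inverse in the inverse semigroup $U$, then $uu^{\ast}u = u$ forces $u^{\ast}u = uu^{\ast} = 1$ upon cancelling in $G$, so $u^{\ast}$ coincides with the group-inverse $u^{-1}$ and $U$ is closed under inversion (and contains $uu^{-1}=1$). Applying this with $U = S\phi_{i}$, I would deduce that $S$ cannot be inverse: were it so, each $S\phi_{i}$ would be an inverse subsemigroup, hence a subgroup, of $G_{i}$, forcing $S$ to be a group contrary to hypothesis. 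Thus $S$ is non-inverse and Theorem \ref{4.9} (with $T_{i}=G_{i}$, which are inverse semigroups) applies.

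By Proposition \ref{Invhull1} the inverse hull $V_{i}$ of $S\phi_{i}$ in $G_{i}$ equals $S\phi_{i} \cup (S\phi_{i})'$, and by the observation of the previous paragraph $V_{i}$ is a subgroup of $G_{i}$. The proof of Theorem \ref{4.9} furnishes an isomorphism $\nu : V_{1} \longrightarrow V_{2}$ with $\phi_{1}\circ\nu = \phi_{2}$; that is, $(S;V_{1},V_{2})$ is a special amalgam of groups. At this point I would mimic the ($\impliedby$) direction of the proof of Proposition \ref{VG}, replacing ``inverse semigroup'' by ``group'' throughout. Concretely, I would take a group $V$ disjoint from the $V_{i}$ together with isomorphisms $\gamma_{i}:V \longrightarrow V_{i}$ chosen so that $\gamma_{1}\circ\nu = \gamma_{2}$, making $(V;V_{1},V_{2})$---and hence, via the inclusions $V_{i}\hookrightarrow G_{i}$, the amalgam $(V;G_{1},G_{2};\gamma_{1},\gamma_{2})$---a special amalgam of groups.

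The decisive step is then to embed $(V;G_{1},G_{2};\gamma_{1},\gamma_{2})$ in a group. This is exactly where the word ``inverse semigroup'' in Proposition \ref{VG} is upgraded to ``group'': instead of invoking that inverse semigroups are amalgamation bases for inverse semigroups (Theorem \ref{Inv1}), I would invoke the classical theorem of Schreier that every group is an amalgamation base for the class of groups, the amalgamated free product embedding both factors. This yields a group $W$ and monomorphisms $\mu_{i}:G_{i}\longrightarrow W$ with $\gamma_{1}\circ\mu_{1}=\gamma_{2}\circ\mu_{2}$. The same short computation as in Proposition \ref{VG}, combining this identity with $\phi_{1}\circ\nu = \phi_{2}$ and $\gamma_{1}\circ\nu = \gamma_{2}$, gives $\phi_{1}\circ\mu_{1}=\phi_{2}\circ\mu_{2}$, so that $\mathcal{A}$ is weakly embedded in the group $W$. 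I expect the main obstacle to be conceptual rather than computational: one must verify that each inverse hull $V_{i}$ genuinely lands in the subcategory of groups, so that the group amalgamation base property is available, and that the formal construction of $V$ and the $\gamma_{i}$ can be carried out disjointly; once these are in place, the remaining verifications are identical to those already performed for inverse semigroups.
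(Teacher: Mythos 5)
Your proposal is correct, but it takes a genuinely different route from the paper. The paper's own proof first applies Theorem \ref{4.9} to embed $(S;G_1,G_2)$ weakly in an inverse semigroup $W$ via monomorphisms $\psi_i:G_i\longrightarrow W$, then uses the universal property of the pushout to obtain $\psi:G_1\ast_S G_2\longrightarrow W$ with $\eta_i\psi=\psi_i$, and concludes by observing that the image $(G_1\ast_S G_2)\psi$ --- the subsemigroup of $W$ generated by the two subgroups $G_1\psi_1$ and $G_2\psi_2$, which share an identity because $\phi_1\psi_1=\phi_2\psi_2$ --- is itself a subgroup of $W$; no group-theoretic amalgamation theorem is invoked, only the inverse-semigroup machinery already in place. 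You instead re-run the ($\impliedby$) direction of Proposition \ref{VG} entirely inside the category of groups, replacing Theorem \ref{Inv1} by Schreier's theorem that group amalgams embed in the amalgamated free product; the composition computation $\phi_1\circ\mu_1=\phi_1\circ\gamma_1^{-1}\circ\gamma_1\circ\mu_1=\phi_2\circ\gamma_2^{-1}\circ\gamma_2\circ\mu_2=\phi_2\circ\mu_2$ then goes through verbatim, and your use of the elementary fact that an inverse subsemigroup of a group is a subgroup correctly certifies that $V$ and the $V_i$ live in the category of groups so that Schreier applies. Your approach buys two things: it imports a classical external theorem but produces the target group $W$ directly, and --- more importantly --- it patches a step the paper leaves implicit: the corollary assumes only that $S$ is not a group, while Theorem \ref{4.9} requires $S$ non-inverse, and your observation that an inverse $S$ embedded as a subsemigroup of a group would force $S$ to be a group is exactly the missing verification. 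The paper's approach, by contrast, is more self-contained relative to its own development, though its final claim that the image is a subgroup ``by the uniqueness of inverses'' is terser than your fully spelled-out argument.
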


\begin{proof}
   By the above theorem $(S;G_1,G_2)$ weakly embeds in an inverse semigroup, say $W$. Let $\psi_1:G_1 \longrightarrow W$ and $\psi_1:G_1 \longrightarrow W$ be the embedding monomorphisms. Then, by the properties of a pushout, there exists a unique homomorphism
   \[
   \psi:G_1*_SG_2 \longrightarrow W,
   \]
   such that $\eta_i \psi = \psi_i$ for $1 \leq i \leq 2$, where $\eta_i$ are as given in Figure 1. Clearly, $(S;G_1,G_2)$ embeds in $(G_1*_SG_2)\psi$, which is a subgroup of $W$ by the uniqueness of inverses in an inverse semigroup.
\end{proof}

\subsection{Connection with dominions}

Let $U$ be a subsemigroup of a semigroup $S$. Then, recall, for instance from \cite{Howie}, that an element $d \in S$ is said to be dominated by $U$ if for all homomorphisms $f,g: S \longrightarrow T$ with $f |_{U} = g|_{U}$ we have $(d)f=(d)g$. The set Dom$_{S}U$ of all elements of $S$ dominated by $U$ is a subsemigroup of $S$, called the  \textit{dominion} of $U$ in $S$.

\begin{proposition}    
    An ample subsemigroup $S$ of an inverse semigroup $T$ is rich ample in $T$ if and only if Dom$_{T}S=S \cup S'$.
\end{proposition}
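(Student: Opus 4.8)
The plan is to show that, for \emph{any} ample subsemigroup $S$ of $T$, the dominion $\mathrm{Dom}_{T}S$ coincides with the inverse hull $V$ of $S$ in $T$. Granting this, the proposition is immediate: by Proposition \ref{Invhull1}, $S$ is rich ample in $T$ if and only if $V=S\cup S'$, and since $S\cup S'\subseteq V$ always holds, replacing $V$ by $\mathrm{Dom}_{T}S$ converts the equality $V=S\cup S'$ into $\mathrm{Dom}_{T}S=S\cup S'$. So the whole problem reduces to the single identity $\mathrm{Dom}_{T}S=V$.

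For the inclusion $V\subseteq \mathrm{Dom}_{T}S$, I recall that $\mathrm{Dom}_{T}S$ is a subsemigroup of $T$ containing $S$ and that $V$ is generated, as a semigroup, by $S\cup S'$ (in an inverse semigroup the subsemigroup generated by $S\cup S'$ is already closed under inverses). Hence it suffices to prove $s^{-1}\in \mathrm{Dom}_{T}S$ for every $s\in S$. This is where ampleness enters: it guarantees $ss^{-1},\,s^{-1}s\in S$, which is exactly what is needed to build a length-one Isbell zigzag for $s^{-1}$ over $S$ (see \cite{Howie}). Concretely, with the outer factors $s^{-1}\in T$ and the inner factors $ss^{-1},\,s,\,s^{-1}s$ drawn from $S$, one verifies the identities
\begin{equation*}
s^{-1}=s^{-1}(ss^{-1}),\quad ss^{-1}=s\cdot s^{-1},\quad s^{-1}\cdot s=s^{-1}s,\quad (s^{-1}s)s^{-1}=s^{-1}.
\end{equation*}
Equivalently, for any two homomorphisms $f,g:T\longrightarrow W$ with $f|_{S}=g|_{S}$, this chain transfers $f$ to $g$ across $s^{-1}$: using $(ss^{-1})f=(ss^{-1})g$, $(s)f=(s)g$ and $(s^{-1}s)f=(s^{-1}s)g$ one computes $(s^{-1})f=(s^{-1})g$. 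Thus $S'\subseteq \mathrm{Dom}_{T}S$, and therefore $V\subseteq \mathrm{Dom}_{T}S$.

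For the reverse inclusion $\mathrm{Dom}_{T}S\subseteq V$, I would first note $\mathrm{Dom}_{T}S\subseteq \mathrm{Dom}_{T}V$ (because $S\subseteq V$) and then prove $\mathrm{Dom}_{T}V=V$. The latter is the statement that the inverse subsemigroup $V$ is closed in $T$, and it follows from $V$ being an amalgamation base for the class of all semigroups (Theorem \ref{Inv1}). Indeed, applying the amalgamation base property to the amalgam formed from two disjoint copies of $T$ over $V$ yields a semigroup $P$ and monomorphisms $\psi_{1},\psi_{2}:T\longrightarrow P$ that agree on $V$ and satisfy the coincidence condition $(ii)$ in the definition of embeddability. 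Any $d\in \mathrm{Dom}_{T}V$ then satisfies $d\psi_{1}=d\psi_{2}$, so $(ii)$ forces $d\in V$; hence $\mathrm{Dom}_{T}V=V$ and $\mathrm{Dom}_{T}S\subseteq V$. Combining the two inclusions gives $\mathrm{Dom}_{T}S=V$, which completes the argument.

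The main obstacle is the inclusion $V\subseteq \mathrm{Dom}_{T}S$, i.e.\ locating the right zigzag for $s^{-1}$; the point to get right is that it is precisely the presence of the two idempotents $ss^{-1}$ and $s^{-1}s$ inside $S$ (that is, the left and right ampleness of $S$) that lets the zigzag close. The reverse inclusion is comparatively routine, being a direct consequence of the fact that inverse semigroups are amalgamation bases.
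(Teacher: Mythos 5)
Your proposal is correct, and it takes a genuinely different route for the heart of the matter. The paper's own proof is only two lines: both directions are obtained by citing Proposition 1 of \cite{NSA}, which is precisely your reduction identity --- for an ample subsemigroup $S$ of an inverse semigroup $T$, the dominion $\mathrm{Dom}_{T}S$ coincides with the inverse hull $V$ of $S$ in $T$ --- after which Proposition \ref{Invhull1} translates $V=S\cup S'$ into rich ampleness exactly as in your first paragraph. What you have done is reprove the cited result self-containedly. Your inclusion $V\subseteq\mathrm{Dom}_{T}S$ is sound: since $\mathrm{Dom}_{T}S$ is a subsemigroup of $T$ containing $S$, and $V$ is generated as a semigroup by $S\cup S'$, it suffices to dominate each $s^{-1}$, and for homomorphisms $f,g:T\longrightarrow W$ with $f|_{S}=g|_{S}$ the chain
\begin{equation*}
(s^{-1})f=(s^{-1})f\,(ss^{-1})g=(s^{-1})f\,(s)f\,(s^{-1})g=(s^{-1}s)g\,(s^{-1})g=(s^{-1})g
\end{equation*}
closes precisely because $ss^{-1},\,s^{-1}s\in S$, i.e., because $S$ is ample --- this is the length-one zigzag you describe. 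Your converse inclusion is also correct: monotonicity of dominions gives $\mathrm{Dom}_{T}S\subseteq\mathrm{Dom}_{T}V$, and strong embeddability of the special amalgam of two disjoint copies of $T$ over the inverse semigroup $V$ (guaranteed by Theorem \ref{Inv1}) forces $\mathrm{Dom}_{T}V=V$ via condition $(ii)$ in the definition of embeddability; this is the standard argument that inverse subsemigroups are absolutely closed. The trade-off is clear: the paper's proof is shorter because it outsources the substance to \cite{NSA}, whereas yours stays entirely within tools already present in the paper (a zigzag-style computation and Theorem \ref{Inv1}) and makes explicit where ampleness is actually used --- without $ss^{-1},s^{-1}s\in S$ the displayed chain has no way to transfer $f$ to $g$ across $s^{-1}$. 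One cosmetic remark: your observation that $S\cup S'\subseteq V$ always holds is superfluous once $\mathrm{Dom}_{T}S=V$ is established, since the equivalence of $V=S\cup S'$ and $\mathrm{Dom}_{T}S=S\cup S'$ is then a direct substitution.
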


\begin{proof}
($\implies$) Let $S$ be rich ample in $T$. Then by Proposition \ref{Invhull1}, the inverse hull of $S$ in $T$ is $S \cup S'$. Now, it follows from Proposition 1 of \cite{NSA} that Dom$_{T}S=S \cup S'$.\vspace{2pt}

\noindent ($\impliedby$) Assume that Dom$_{T}S=S \cup S'$, with $S$ being an ample in $T$. Then by Proposition 1 of \cite{NSA} the inverse hull of $S$ in $T$ is $S \cup S'$. Consequently, $S$ is rich ample in $T$ by Proposition \ref{Invhull1}.
\end{proof}

\begin{remark}
    It follows from the above proposition and zigzag theorem (see, for instance, \cite{Howie}, Theorem 8.3.3) that $S$ is rich ample in $T$ if and only if for all $t\in T$, the equality $t \otimes 1 = 1 \otimes t$ in the tensor product $T \otimes_{S}T$ implies that $t \in S \cup S'$. 
\end{remark}

\section{Ultra-rich ample semigroups}

In this section we introduce a special class of rich ample semigroups, namely the ultra-rich ample semigroups, that stems from our earlier considerations. The reader may refer to Section 6 for some natural examples of rich and ultra-rich ample semigroups.

\begin{definitions}
\label{Def3}\upshape Let a subsemigroup $S$ be rich right (respectively, left) ample in an inverse semigroup $T$ such that the elements $a$ (respectively, $b$) given in Proposition \ref{Prop1} (respectively, \ref{Prop2}) are uniquely determined. Then we say that{\tiny \ \ }$S$ is \textit{ultra-rich right} (respectively, \textit{left}) \textit{ample} in $T$. We call $S$ \textit{ultra-rich ample} in $T$ if it is both ultra-rich right and ultra-rich left ample in $T$. We also recall \cite[page 21]{C&P} that a semigroup is called \textit{unipotent}  if it contains precisely one idempotent. 
\end{definitions}

\begin{proposition}\label{USO1}
    If $S$ is ultra-rich right (left) ample subsemigroup of an inverse semigroup $T$ then $S$ is unipotent.
\end{proposition}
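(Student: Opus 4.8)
\noindent The plan is to prove that $S$ has at most one idempotent by exploiting the uniqueness clause in the definition of ultra-rich ampleness. I will argue the right-ample case; the left-ample case is entirely dual, using Proposition \ref{Prop2} in place of Proposition \ref{Prop1}. So suppose $e,f \in E(S)$ are idempotents; the goal is to show $e=f$.

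Since $S$ is ultra-rich right ample, Proposition \ref{Prop1} provides, for every $x,y \in S$, a \emph{unique} element $a \in S \cup S'$ satisfying $x^{-1}y = x^{-1}x\,a$. First I would feed in $x=e$ and $y=f$. As $e$ is idempotent we have $e^{-1}=e$ and $e^{-1}e=e$, so the defining equation collapses to $ef = e\,a$. The key observation is that there are two obvious candidates for $a$ in $S \cup S'$, namely $a=ef$ and $a=f$: both lie in $S$ (since $S$ is a subsemigroup and $e,f \in S$), and both satisfy the equation, because $e(ef)=ef$ and $ef=ef$ by the idempotency of $e$. Uniqueness of $a$ then forces $ef=f$.

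Interchanging the roles of $e$ and $f$, that is, taking $x=f$ and $y=e$, yields by the same reasoning $fe=e$; and since idempotents commute in the inverse semigroup $T$, this reads $ef=e$. Combining $ef=f$ with $ef=e$ gives $e=f$, so $S$ is unipotent. I do not anticipate a serious obstacle here: the only points requiring care are checking that both candidate values of $a$ genuinely lie in $S \cup S'$ (immediate, as each is a product of idempotents of $S$) and that the two instances of the uniqueness argument combine correctly via the commutativity of idempotents in $T$.
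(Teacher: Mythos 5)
Your proof is correct and follows essentially the same route as the paper: you substitute two idempotents $e,f$ into the defining equation $x^{-1}y = x^{-1}x\,a$, exhibit the two candidates $a = ef$ and $a = f$ (the paper writes these as $e_1^{-1}e_2$ and $e_2$, which coincide with yours since $e^{-1}=e$ for an idempotent in an inverse semigroup), and use uniqueness to get $ef=f$, then symmetry and commutativity of idempotents to conclude $e=f$. No gaps.
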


\begin{proof}
    Let $S$ be an ultra-rich right ample subsemigroup of an inverse semigroup $T$. Let $e_1, e_2 \in E(S)$. Then
    \[
    e_1^{-1}e_2 = e_1^{-1}e_1 a
    \]
for a unique $a \in S \cup S'$. Because
\[
e_1^{-1}e_2 = e_1^{-1}e_1 (e_1^{-1}e_2), \hspace{4pt} \text{and} \hspace{6pt} e_1^{-1}e_2 = e_1^{-1}e_1 (e_2),
\]
we get from the uniqueness of $a$:
\[
e_1e_2 = e_1^{-1}e_2 = e_2.
\]
Similarly, we may calculate $e_2e_1 = e_1$, whence by the commutativity of idempotents, $e_1 = e_2$, and we conclude that $E(S)$ is a singleton.\vspace{2pt}

Similarly, one can show that any ultra-rich left ample subsemigroup of an inverse semigroup is unipotent.
\end{proof}

\begin{corollary}\label{gpcrlry}
    Let $S$ be an inverse semigroup. Then $S$ is ultra-rich right (left) ample if and only if it is a group.
\end{corollary}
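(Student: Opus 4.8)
The plan is to obtain the forward direction as an immediate consequence of Proposition \ref{USO1} together with the classical characterisation of groups among inverse semigroups, and to settle the converse by a direct verification against the definitions.

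For the ($\Rightarrow$) direction, suppose the inverse semigroup $S$ is ultra-rich right (respectively, left) ample. Proposition \ref{USO1} then yields that $S$ is unipotent, say with unique idempotent $e$. I would next invoke the well-known fact that a unipotent inverse semigroup is a group, supplying the short argument for completeness: for each $x \in S$ the elements $xx^{-1}$ and $x^{-1}x$ are idempotents, hence both equal $e$; consequently $ex = xx^{-1}x = x$ and $xe = xx^{-1}x = x$, so that $e$ is a two-sided identity and $x^{-1}$ a two-sided inverse of $x$. Thus $S$ is a group.

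For the ($\Leftarrow$) direction, let $S$ be a group with identity $e$, regarded as an inverse subsemigroup of itself (equivalently, of any ambient inverse semigroup containing it). Since $S$ is closed under inverses, its dual satisfies $S' = S$, so $S \cup S' = S$. Rich right (and left) ampleness therefore holds trivially, because $x^{-1}y \in S = S \cup S'$ for all $x, y \in S$. For the ultra-rich requirement I would note that $x^{-1}x = e$ for every $x \in S$, whence the defining equation $x^{-1}y = x^{-1}x a = e a = a$ of Proposition \ref{Prop1} forces $a = x^{-1}y$; the witness $a$ is thus uniquely determined, and $S$ is ultra-rich right ample. The left case follows by the symmetric computation based on Proposition \ref{Prop2}.

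No genuine obstacle arises once Proposition \ref{USO1} is available; the only point meriting attention is the observation that, as $S$ is already inverse, its dual $S'$ collapses onto $S$, so that the rich and ultra-rich ampleness conditions become statements internal to $S$ and the ambient inverse semigroup plays no role.
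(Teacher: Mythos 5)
Your proof is correct and follows exactly the route the paper intends by its one-word proof (``Straightforward''): the forward direction is Proposition \ref{USO1} plus the classical fact that a unipotent inverse semigroup is a group, and the converse is the direct verification that in a group $S' = S$ and $x^{-1}xa = ea = a$ forces the witness $a = x^{-1}y$ to be unique. Your explicit observation that the inverses of $S$ coincide with those of any ambient inverse semigroup (by uniqueness of inverses), so the conditions become internal to $S$, is a worthwhile detail the paper leaves implicit.
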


\begin{proof}
    Straightforward.
\end{proof}

\begin{proposition}\label{URA2}
    Let $S$ be a subsemigroup of an inverse semigroup $T$. Then $S$ is ultra-rich ample in $T$ if and only if it is unipotent and rich ample in $T$.
\end{proposition}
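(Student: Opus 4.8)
My plan is to prove the two implications separately, with the forward one essentially immediate and the converse carrying the real content. For the forward implication, I would observe that if $S$ is ultra-rich ample in $T$, then by Definitions \ref{Def3} it is both ultra-rich right ample and ultra-rich left ample, hence in particular rich right and rich left ample, and therefore rich ample in $T$. Unipotency then comes for free from Proposition \ref{USO1}, which already guarantees that an ultra-rich right (or left) ample subsemigroup of an inverse semigroup is unipotent.

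The substantive direction is the converse, so suppose $S$ is unipotent and rich ample in $T$ and let $e$ be its unique idempotent. The key preliminary step I would establish is that $e$ acts as a two-sided identity on the inverse hull of $S$ in $T$, which equals $S\cup S'$ by Proposition \ref{Invhull1}. Since rich ampleness forces ampleness (Propositions \ref{Prop1} and \ref{Prop2}), every $x^{-1}x$ and $xx^{-1}$ lies in $S$; being idempotents of the unipotent semigroup $S$, they must all coincide with $e$. From $s^{-1}s=ss^{-1}=e$ and the identity $ss^{-1}s=s$ I would read off $es=se=s$ for each $s\in S$, and for $z^{-1}\in S'$ the computations $ez^{-1}=(z^{-1}z)z^{-1}=z^{-1}zz^{-1}=z^{-1}$ and $z^{-1}e=z^{-1}(zz^{-1})=z^{-1}zz^{-1}=z^{-1}$ show that $e$ also fixes $S'$ on both sides. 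Hence $ea=ae=a$ for every $a\in S\cup S'$.

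Granting this, the required uniqueness drops out. By Proposition \ref{Prop1}, rich right ampleness produces for each $x,y\in S$ some $a\in S\cup S'$ with $x^{-1}y=x^{-1}xa$; but $x^{-1}x=e$, so $x^{-1}xa=ea=a$, forcing $a=x^{-1}y$ and thereby pinning $a$ down uniquely, so $S$ is ultra-rich right ample. The mirror computation via Proposition \ref{Prop2}, namely $xy^{-1}=byy^{-1}=be=b$, fixes $b=xy^{-1}$ and makes $S$ ultra-rich left ample; together these yield ultra-rich ampleness as in Definitions \ref{Def3}. The one step requiring genuine care, and which I expect to be the main obstacle, is verifying that $e$ is an identity not merely on $S$ but on its dual $S'$: this is precisely where unipotency is used, collapsing the products $x^{-1}xa$ and $byy^{-1}$ down to $a$ and $b$. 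Everything else is routine manipulation inside $T$.
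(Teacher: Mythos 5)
Your proof is correct and takes essentially the same approach as the paper: the forward direction via Proposition \ref{USO1} together with the trivial implication from ultra-rich to rich, and the converse by using unipotency to force $x^{-1}x=xx^{-1}=e$ for all elements of $S\cup S'$, so that $x^{-1}y=x^{-1}xa=ea=a$ (and dually $xy^{-1}=be=b$) pins the witnesses down uniquely. Your explicit check that $e$ is a two-sided identity on $S\cup S'$ is just a slightly fuller version of the paper's one-line computation $a=(aa^{-1})a=ea$.
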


\begin{proof}
($\implies$) A proof of the direct part follows from Propositions \ref{USO1} and the fact that any ultra-rich ample subsemigroup of $T$ is rich ample in $T$.\vspace{2pt}

\noindent ($\impliedby$) Let $S$ be a rich ample subsemigroup of an inverse semigroup $T$ such that $E(S)=\{e\}$. This implies that $x^{-1}x=xx^{-1}=e$ for all $x \in S \cup S'$. Now, for any $y \in S$, there exists, by Condition (2) of Proposition \ref{Prop1}, an element $a \in S \cup S'$ such that
\begin{equation*}
    x^{-1}y=x^{-1}xa \;(=ea).
\end{equation*}
To prove the uniqueness of $a$ let
\begin{equation*}
    x^{-1}y=x^{-1}xc \;(=ec),
\end{equation*}
for some $c \in S \cup S'$. Then
\begin{equation*}    
a=(aa^{-1})a=ea=x^{-1}xa=x^{-1}y=x^{-1}xc=ec=(cc^{-1})c=c.
\end{equation*}
This proves that $U$ is ultra-rich right ample. Similarly, one can show that $U$ is ultra-rich left ample.
\end{proof}
 
\begin{lemma}
A subsemigroup $S$ of an inverse semigroup $T$ is ultra-rich right (respectively, left) ample if and only if $S'$ is an ultra-rich left (respectively, right) ample subsemigroup of $T$.
\end{lemma}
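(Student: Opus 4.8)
The plan is to reduce the statement to the equivalence already recorded in Lemma~\ref{U&U'} together with a transfer of the relevant uniqueness clause under the involution $\alpha\colon z\mapsto z^{-1}$ on the inverse hull $S\cup S'$. By Definitions~\ref{Def3}, $S$ is ultra-rich right ample in $T$ precisely when $S$ is rich right ample in $T$ and, for all $x,y\in S$, the element $a\in S\cup S'$ with $x^{-1}y=x^{-1}xa$ furnished by Proposition~\ref{Prop1} is uniquely determined; dually, $S'$ is ultra-rich left ample in $T$ precisely when $S'$ is rich left ample in $T$ and, for all $u,v\in S'$, the element $b\in (S')\cup (S')'=S\cup S'$ with $uv^{-1}=b\,vv^{-1}$ furnished by Proposition~\ref{Prop2} is uniquely determined. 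The ``rich'' halves of these two conditions are already known to be equivalent by part~(1) of Lemma~\ref{U&U'}, since $(S')'=S$ and $S\cup S'=S'\cup S$. Thus it remains only to show that the two uniqueness clauses match up.

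To this end I would set up a bijection between the decomposition data for $S$ and that for $S'$ using $\alpha$. Given $x,y\in S$, put $u=y^{-1}$ and $v=x^{-1}$; as $(x,y)$ ranges over $S\times S$ this produces each pair $(u,v)\in S'\times S'$ exactly once, because $\alpha$ is an involutive bijection of $S\cup S'$ carrying $S$ onto $S'$. Applying $\alpha$ to the right-ample identity $x^{-1}y=x^{-1}xa$ and using that $x^{-1}x$ is idempotent, so that $(x^{-1}x)^{-1}=x^{-1}x$, yields $y^{-1}x=a^{-1}\,x^{-1}x$, that is, $uv^{-1}=a^{-1}\,vv^{-1}$. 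Since $S\cup S'$ is closed under inverses, $a\in S\cup S'$ if and only if $a^{-1}\in S\cup S'$, so $a\mapsto a^{-1}$ is a bijection between the set of admissible constants $a$ for the pair $(x,y)$ in the right-ample decomposition for $S$ and the set of admissible constants $b=a^{-1}$ for the pair $(u,v)$ in the left-ample decomposition for $S'$; the reverse passage is obtained by applying $\alpha$ to $uv^{-1}=b\,vv^{-1}$. Consequently the admissible $a$ is unique for every pair in $S$ if and only if the admissible $b$ is unique for every pair in $S'$.

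Combining the two observations gives the equivalence that $S$ is ultra-rich right ample if and only if $S'$ is ultra-rich left ample. The parenthetical ``respectively'' case, namely that $S$ is ultra-rich left ample if and only if $S'$ is ultra-rich right ample, follows by the symmetric argument, or simply by applying the statement just proved to the subsemigroup $S'$ in place of $S$ and invoking $(S')'=S$. I expect no serious obstacle here: the only point requiring care is the bookkeeping of the parametrization $(x,y)\mapsto(y^{-1},x^{-1})$ and the verification that $\alpha$ interchanges the right- and left-ample normal forms exactly, including the harmless identity $(x^{-1}x)^{-1}=x^{-1}x$, which is where the entire correspondence is concentrated.
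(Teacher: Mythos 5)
Your proof is correct and is exactly the routine argument the paper leaves implicit behind its one-word proof (``Straightforward''): invert the normal form $x^{-1}y=x^{-1}xa$ in $T$ to get $uv^{-1}=a^{-1}vv^{-1}$ with $(u,v)=(y^{-1},x^{-1})$, note that inversion gives a bijection $S\times S\to S'\times S'$ and a bijection $a\mapsto a^{-1}$ between the admissible constants in $S\cup S'$, and handle the rich halves via Lemma~\ref{U&U'}. Nothing to add; the bookkeeping, including $(x^{-1}x)^{-1}=x^{-1}x$ and $S'\cup(S')'=S\cup S'$, is all in order.
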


\begin{proof}
Straightforward.
\end{proof}

\begin{corollary}\label{gphull}
Let $S$ be an ultra-rich ample subsemigroup of an inverse semigroup $T$. Then the inverse hull $S\cup S'$ of $S$ is a subgroup of $T$.
\end{corollary}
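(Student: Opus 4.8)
The plan is to reduce the statement to the elementary fact that an inverse semigroup with a single idempotent is a group. First I would invoke Proposition \ref{URA2} to record that, $S$ being ultra-rich ample in $T$, it is both unipotent and rich ample in $T$. The richness then gives, via Proposition \ref{Invhull1}, that the inverse hull of $S$ in $T$ is exactly $S \cup S'$; in particular $S \cup S'$ is an inverse subsemigroup of $T$, so it suffices to show that this inverse subsemigroup is in fact a group.

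Next I would pin down its idempotents. Writing $E(S) = \{e\}$ for the unique idempotent guaranteed by unipotence (equivalently, by Proposition \ref{USO1}), I recall the observation made at the beginning of Section 4 that $E(S) = E(S') \subseteq S \cap S'$. Since, as a set, $S \cup S'$ is literally the union of $S$ and $S'$, every idempotent of $S \cup S'$ lies in $S$ or in $S'$, hence in $E(S) \cup E(S') = \{e\}$. Therefore $E(S \cup S') = \{e\}$ is a singleton.

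Finally, I would appeal to the standard fact that an inverse semigroup whose idempotent set is a singleton is a group: if $e$ is the only idempotent, then $x^{-1}x = e = xx^{-1}$ for every $x$, so that $e$ is a two-sided identity and each $x$ has $x^{-1}$ as a two-sided inverse relative to $e$. Applying this to the inverse subsemigroup $S \cup S'$ of $T$ shows that $S \cup S'$ is a subgroup of $T$, as claimed.

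I do not anticipate a genuine obstacle here, as the argument is essentially a chain of citations of the preceding results. The only point requiring a moment of care is the identification $E(S \cup S') = \{e\}$, which rests on the union decomposition of the inverse hull supplied by Proposition \ref{Invhull1} together with the coincidence $E(S) = E(S')$; everything else is immediate.
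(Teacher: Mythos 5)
Your proof is correct, and it takes a mildly different route from the paper's, one that is in fact tighter at a key point. The paper's own proof runs: by Proposition \ref{Invhull1} the hull $S\cup S'$ is an inverse subsemigroup of $T$; then it \emph{asserts} that $S\cup S'$ is itself ultra-rich ample ``because $S$ and $S'$ are,'' and concludes via Corollary \ref{gpcrlry} (an inverse semigroup is ultra-rich right/left ample iff it is a group). You instead extract unipotence of $S$ from Proposition \ref{URA2} (equivalently \ref{USO1}), use the identity $E(S)=E(S')$ from the start of Section 4 to get $E(S\cup S')=E(S)\cup E(S')=\{e\}$, and finish with the elementary fact that an inverse semigroup with a single idempotent is a group. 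The two arguments share the same kernel---unipotent inverse semigroups are groups, which is also what underlies Corollary \ref{gpcrlry}---but your version buys something concrete: the paper's middle step, that the union $S\cup S'$ of an ultra-rich ample semigroup and its dual is again ultra-rich ample, is stated without justification and is not an instance of any closure result proved in the paper; checking it (say via Proposition \ref{URA2}: $S\cup S'$ is rich ample in $T$ because it is an inverse subsemigroup, and unipotent by your idempotent count) amounts to exactly the computation you carry out explicitly. So your proof fills in what the paper glosses over, while relying on one fewer unproved claim; your own flagged point of care, the identification $E(S\cup S')=\{e\}$, is handled correctly, since an idempotent of the union lies in $S$ or in $S'$ and hence in $E(S)\cup E(S')$.
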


\begin{proof}
It follows from Proposition \ref{Invhull1} that $S\cup S'$ is an inverse subsemigroup of $T$. Because $S$ and $S'$ are ultra-rich ample, such is $S\cup S'$. The proof now follows from Corollary \ref{gpcrlry}.
\end{proof}

\begin{corollary}
    Let $U$ be an ultra-rich ample subsemigroup of an inverse semigroup $S$ and $\phi:S \longrightarrow T$ be a homomorphism of (inverse) semigroups. Then $U\phi$ is an ultra-rich ample subsemigroup of $T$.
\end{corollary}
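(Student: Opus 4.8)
The plan is to invoke Proposition \ref{URA2}, which reduces the claim to showing that $U\phi$ is simultaneously rich ample in $T$ and unipotent; once these two properties are established, ultra-rich ampleness of $U\phi$ follows at once.

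For the rich ampleness I would first note that an ultra-rich ample subsemigroup is in particular rich ample, so $U$ is rich ample (hence both rich right and rich left ample) in $S$. Remark \ref{4.8} then guarantees that $U\phi$ is rich right and rich left ample in $T$, i.e.\ rich ample in $T$. At this point I would record the small but necessary observation that, because homomorphisms of inverse semigroups preserve inverses, one has $(x\phi)^{-1} = (x^{-1})\phi$ for every $x \in S$, so that the dual of $U\phi$ in $T$ is precisely $U'\phi$; this is exactly what makes the two rich-ample inclusions transfer along $\phi$.

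The crux of the argument, and the main obstacle, is unipotence, since $\phi$ need not be injective and a homomorphic image could a priori acquire additional idempotents. By Proposition \ref{USO1} (or Proposition \ref{URA2}) $U$ is unipotent; write $E(U) = \{e\}$ and recall from the proof of Proposition \ref{URA2} that $x^{-1}x = xx^{-1} = e$ for every $x \in U$ (equivalently, $U \cup U'$ is a subgroup of $S$ by Corollary \ref{gphull}). To show $E(U\phi) = \{e\phi\}$ I would take an arbitrary idempotent $f \in E(U\phi)$, write $f = x\phi$ with $x \in U$, and compute: from $f^2 = f$ we obtain $(x^2)\phi = x\phi$, and multiplying on the left by $(x\phi)^{-1} = (x^{-1})\phi$ gives $(x^{-1}x^2)\phi = (x^{-1}x)\phi$, that is, $x\phi = e\phi$. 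Hence $f = e\phi$, so $U\phi$ possesses the single idempotent $e\phi$ and is unipotent.

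Finally, having verified that $U\phi$ is a rich ample and unipotent subsemigroup of the inverse semigroup $T$, I would conclude by Proposition \ref{URA2} that $U\phi$ is ultra-rich ample in $T$, completing the proof.
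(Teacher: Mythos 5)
Your proof is correct, and its skeleton is the paper's own: reduce via Proposition \ref{URA2} to showing that $U\phi$ is rich ample and unipotent, with rich ampleness supplied by Remark \ref{4.8}. Where you genuinely diverge is the unipotence step. The paper dispatches it in one line structurally: by Corollary \ref{gphull} the inverse hull $U\cup U'$ is a subgroup of $S$, so its homomorphic image $(U\cup U')\phi$ is a subgroup of $T$ and hence has a unique idempotent, which the subsemigroup $U\phi\subseteq (U\cup U')\phi$ then inherits (it contains at least one, namely $e\phi$). You instead verify unipotence element-wise: given $f=x\phi\in E(U\phi)$ with $x\in U$, you multiply $(x^{2})\phi=x\phi$ on the left by $(x^{-1})\phi=(x\phi)^{-1}$ and use $x^{-1}x=xx^{-1}=e$ to get $x^{-1}x^{2}=ex=x$ and thus $f=x\phi=e\phi$. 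This computation is sound, since $x^{-1}x=xx^{-1}=e$ for all $x\in U$ follows from $E(U\cup U')=E(U)=\{e\}$ exactly as in the proof of Proposition \ref{URA2} --- though note that this identity is precisely what makes $U\cup U'$ a group, so your argument rests on the same underlying fact as the paper's, just unpacked by hand. What your version buys is self-containedness (you need only that homomorphisms between inverse semigroups preserve inverses, not Corollary \ref{gphull} as an operative step); what the paper's version buys is brevity and an explicit structural reason (images of groups are groups). Your parenthetical observation that the dual of $U\phi$ in $T$ is $U'\phi$, because $(x\phi)^{-1}=(x^{-1})\phi$, is a worthwhile explicit supplement to Remark \ref{4.8}, which the paper leaves implicit.
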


\begin{proof}
    Note that $U\phi$ is rich ample by Remark \ref{4.8}. On the other hand, being the image of a group, $(U \cup U')\phi$ is a subgroup of $T$. So, $(U \cup U')\phi$, and consequently $U\phi$,  must contain a unique idempotent. The corollary now follows by Proposition \ref{URA2}. 
\end{proof}

It is an easy exercise to show that every subsemigroup of a finite group $G$ is a subgroup of $G$. Also, recall from Corollary \ref{gphull} that the inverse hull of an ultra-rich ample subsemigroup of an inverse semigroup $T$ is a subgroup of $T$. Consequently, we make the following observation.

\begin{remark}\label{finite}
Assume that $S$ is an ultra-rich ample subsemigroup of an inverse semigroup $T$. If $T$ (or $S$) is finite then $S$ is a subgroup of $T$.
\end{remark}

\section{Examples}

By an (ultra-) rich (right, left) ample semigroup we shall mean a semigroup that is such in some inverse oversemigroup. Clearly every group is an inverse as well as an ultra-rich ample semigroup, whereas every inverse semigroup is a rich ample and hence an ample semigroup. In view of Remark \ref{finite}, we shall consider, in the first five examples, the infinite multiplicative inverse monoid $\mathbb{Q}$ of rational numbers. Examples 6--9 will concern the finite case.
\begin{enumerate}
    \item It is straightforward to observe that the (non-inverse) multiplicative monoid $\mathbb{N}$ of natural numbers is ample but not rich (right, left) ample in $\mathbb Q$.
    \item On the other hand, if $\mathbb{Q}^{+}_{1}$ denotes the submonoid of $\mathbb{Q}$ containing the unit $1$ and the elements of the form $p/q$ such that $p$ and $q$ are relatively prime positive integers with $p>q$. Then $\mathbb{Q}^+_{1}$ is a non-inverse ultra-rich ample submonoid of $\mathbb{Q}$.
    \item The non-inverse monoid $\mathbb{Q}^{+}_{1} \cup \{0\}$ is rich ample but not ultra-rich (right, left) ample in $\mathbb{Q}$.
    \item The inverse monoid $\mathbb{Q}$ is not ultra-rich ample in itself.
    \item The subgroup $\mathbb{Q}\smallsetminus \{0\}$ is inverse, a well as ultra-rich ample submonoid of $\mathbb{Q}$.
    \item Let $\mathcal{I}_{n}$, $n\geq 2$, denote the symmetric inverse monoid over a finite chain ${C}_{n}$: $1< \cdots<n$ of natural numbers. Let $\mathcal{DI}_{n}$ and $\mathcal{DI}^{+}_{n}$ denote the (non-inverse) ample submonoids of $\mathcal{I}_{n}$ comprising, respectively, the order decreasing and order increasing partial bijections \cite{NSA}. To see $\mathcal{DI}_{n}$ and $\mathcal{DI}^{+}_{n}$ are rich (right, left) ample in $\mathcal{I}_{n}$ for $n \leq 3$ one simply needs to effectuate some computations. We show that $\mathcal{DI}_{n}$ and $\mathcal{DI}^{+}_{n}$ are not rich (right, left) ample in $\mathcal{I}_{n}$ for all $n \geq 4$. To this end, consider the following elements of $\mathcal{DI}_{4}$:
\begin{equation*}
\begin{split}
u &=\{(4,3),(3,2)\}\text{,} \\ v&=\{(4,4),(3,1)\}.
\end{split}
\end{equation*}
Then
\begin{equation*}
u^{-1}v =\{(3,4),(2,1)\}\notin U \cup U',
\end{equation*}
implying that $\mathcal{DI}_{4}$ is not rich right ample in $\mathcal{I}_{n}$. Similarly, one can show that $\mathcal{DI}_{5}$ is not rich left ample in $\mathcal{I}_{n}$. That $\mathcal{DI}^{+}_{n}$ is not rich (right, left) ample in $\mathcal{I}_{n}$ follows from Lemma \ref{U&U'}.
Because $\mathcal{DI}_{m}$ (respectively, $\mathcal{DI}^{+}_{m}$) is contained in $\mathcal{DI}_{n}$ (respectively, $\mathcal{DI}^{+}_{n}$) for $m \leq n$, it follows that $\mathcal{DI}_{n}$ and $\mathcal{DI}^{+}_{n}$ are not rich (right, left) ample in $\mathcal{I}_{n}$ for all $n \geq 4$.

\item Let $\mathcal{I}_{n}$ ($n\geq 2$) denote the inverse monoid considered in the previous example. As every inverse subsemigroup of an inverse semigroup is rich ample, the members of the chain
\begin{equation*}
\mathbb{L}_{\text{INV}}:\{\iota\} \subseteq \mathcal{OI}_{n}\subset \mathcal{RI}_{n}\subset \mathcal{I}_{n}^{\prime }\subset \mathcal{I}_{n}
\end{equation*}
from Example \ref{Example3.3} are rich ample in $\mathcal{I}_{n}$. However, because $\mathcal{OI}_{n}$, $\mathcal{RI}_{n}$, $\mathcal{I}'_{n}$ and $\mathcal{I}_{n}$ are not unipotent, by Proposition \ref{USO1} none of them is ultra-rich (right, left) ample in $\mathcal{I}_{n}$.

\item The symmetric group $S_{n}$ is ultra-rich ample in $\mathcal{I}_{n}$.

\item If $S$ is a group then it is straightforward to observe that every monogenic subsemigroup of $S$ is ultra-rich ample. Thus, the monogenic subsemigroups of $S_{\mathbb{N}}$ are ultra-rich ample in $\mathcal{I}_{\mathbb{N}}$. 

\end{enumerate}

\noindent Based on the above examples, we have the following containment diagram for various classes of right ample subsemigroups of an inverse semigroup, considered in earlier sections. There exist similar diagrams for the corresponding classes of left ample and (two-sided) ample subsemigroups of an inverse semigroup. (The oval representing ultra-rich ample semigroups disappears in the finite case.)

\begin{figure}[htbp]
\centering
\includegraphics [width=100mm]{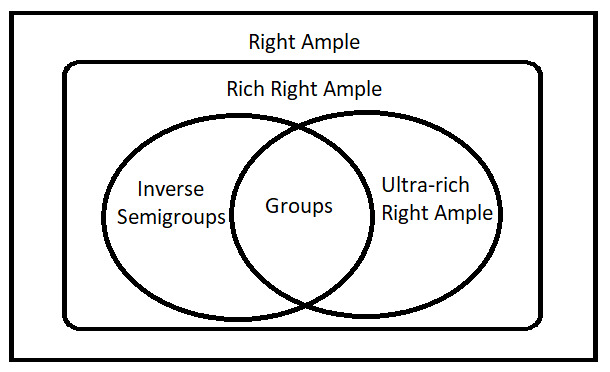}
\end{figure}\label{D2}

\begin{acknowledgement}
\upshape The author is thankful to Professor Victoria Gould for her comments that were received during his visit to the University of York, UK. I am also grateful to Professor Valdis Laan and Dr. Ülo Reimaa for their valuable comments on the earlier drafts of this article. This research has been supported by Estonian Science Foundation's grants PRG1204. 
\end{acknowledgement}


\begin{thebibliography}{9}

\bibitem{C&P} A. H. Clifford and G. B. Preston: \textit{The Algebraic Theory of Semigroups}. Vol. I, Mathematical Surveys of the Amer. Math. Soc. No. 7,
Providence R.I. (1967)

\bibitem{Fountain} J. Fountain, G. M. S. Gomes and V. Gould: The free ample
monoid.\ Internat. J. Algebra Comput. 19-4\textbf{\ }(2009), 527--554

\bibitem{Gould and Kambites} V. Gould and M. Kambites: Faithful functors
from cancellative categories to cancellative monoids with an application to
ample semigroups. Internat. J. Algebra Comput. 15-4 (2005), 683--698

\bibitem{Howie} J. M. Howie: \textit{Fundamentals of Semigroup Theory}.
Clarendon Press, Oxford (1995)

\bibitem{Howie 1} J. M. Howie: Embedding theorems with amalgamation for
Semigroups. Proc. London Math. Soc. 12(3) (1962), 511--534

\bibitem{Kristiina and Nasir} K. Rahkema and N. Sohail: A note on embedding
of semigroup amalgams. Acta Comment. Univ. Tartu. Math. 18(2) (2014),
261--263

\bibitem{NSA} N. Sohail and A. Umar: On dominions of certain ample
monoids. Acta Comment. Univ. Tartu. Math. 27(1) (2023), 17--28

\end{thebibliography}
\end{document}